\journal{Int. J. Numer. Meth. Engng}
\newtheorem{theorem}{Theorem}[section]
\newtheorem{example}{Example}[section]
\newtheorem{lemma}{Lemma}[section]
\newtheorem{remark}{Remark}[section]
\newtheorem{algorithm}{Algorithm}[section]
\definecolor{tabclr}{cmyk}{0,0,1,0}
\newcommand{\bx}{\boldsymbol{x}}
\newcommand{\balpha}{\boldsymbol{\alpha}}
\newcommand{\Rmnum}[1]{\uppercase\expandafter{\romannumeral #1}} 
\begin{document}

\title{Adaptive BDDC algorithms for the system arising from plane wave discretization of Helmholtz equations}

\author[XTU1]{Jie Peng\corref{cof}}
\ead{xtu\_pengjie@163.com}

\author[XTU1,XTU2]{Junxian Wang\corref{cof}}
\ead{wangjunxian@xtu.edu.cn}

\author[XTU1,XTU2]{Shi Shu\corref{cor}}
\ead{shushi@xtu.edu.cn}

\cortext[cof]{These authors contributed equally to this work and should be considered co-first authors}
\cortext[cor]{Corresponding author}
\address[XTU1]{School of Mathematics and Computational Science, Xiangtan University, Xiangtan 411105, China}
\address[XTU2]{Hunan Key Laboratory for Computation and Simulation in Science and Engineering,
Xiangtan University, Xiangtan 411105, China}

\begin{abstract}
 Balancing domain decomposition by constraints (BDDC) algorithms with adaptive primal constraints are developed in a concise variational framework for the weighted plane wave least-squares (PWLS) discritization of Helmholtz equations with high and various wave numbers.
 The unknowns to be solved in this preconditioned system are defined on elements rather than vertices or edges,
 which are different from the well-known discritizations such as the classical finite element method.
 Through choosing suitable ``interface" and appropriate primal constraints with complex coefficients and  introducing some local techniques, we developed a two-level adaptive BDDC algorithm for the PWLS discretization,
 and the condition number of the preconditioned system is proved to be bounded above by a user-defined tolerance and a constant which is only dependent on the maximum number of interfaces per subdomain. A multilevel algorithm is also attempted to resolve the bottleneck in large scale coarse problem.
 Numerical results are carried out to confirm the theoretical results and illustrate the efficiency of the proposed algorithms.
\end{abstract}


\begin{keyword}
Helmholtz equation, high wave number, plane wave discretization, BDDC algorithm, adaptive primal constraints
\MSC[2010] 65N30 \sep 65F10 \sep 65N55
\end{keyword}

\maketitle

\section{Introduction}\label{sec:1}\setcounter{equation}{0}

Helmholtz equations have many applications in  electromagnetic radiation, acoustics scattering and exploration seismology.
%
%
%
As the oscillatory behavior of the solution of the Helmholtz equation,
the corresponding discrete system is usually huge and highly indefinite, especially for high wave numbers.
The plane wave methods, which fall into the class of Trefftz methods \cite{HMP2016},
are popular discretization methods for solving this kind of equations \cite{FHH2003,FWT2004,CD1998,HKM2004,L1996,LA2001}.
Compared with the classical finite element method (FEM) \cite{BS1997, DB1999}, the plane wave methods can significantly
reduce the required degree of freedom under the same error precision, and with the increase of the wave number, the superiority is more obvious.
The weighted plane wave least-squares method (PWLS) is a frequently-used plane wave method \cite{TP1995,S1998,MW1999,HY2014,HL20171,HL20172}.
One advantage of PWLS over the other plane wave methods is that the stiffness matrix of the PWLS discrete system is Hermitian positive definite, this
lead to solve the resulting system by preconditioned conjugate gradient (PCG) method, and
the preconditioner plays an important role in the iterative process.

The development of an efficient solver or preconditioner for the Helmholtz equation has led to a great interest over the course
of the past decades \cite{CW1992}.
Domain decomposition (DD) methods are powerful parallel methods for solving the systems arising from finite element discretization of elliptic problems.
There exist many well known nonoverlapping DD methods for solving indefinite systems of Helmholtz equations, like the Robin-type DD method \cite{LX2014,CLX2016},
the substructuring method \cite{HZ2016}, the finite element tearing and interconnecting (FETI) method \cite{FMT1999,TMF2001} and the dual-primal finite element tearing and interconnecting (FETI-DP) method \cite{M2002,FAT2005}.
Alternative advanced nonoverlapping DD method is
the balancing domain decomposition by constraints (BDDC) methods \cite{Dohrmann2003,LW2006,BS2007}.
In the works by Li and Tu \cite{LT2008,TL2009}, the BDDC, which incorporated some plane waves in the coarse problem to accelerate the convergence rate,  were extended to solving the FEM discrete system of Helmholtz equation. Numerical experiments illustrate that the convergence rate depends on a logarithmic pattern of the dimension of the local subdomain problems, improves with the decrease of the subdomain diameters, and depends on the wave number but it can be improved by including more plane wave continuity constraints in the coarse space.
Therefore, to enhance the robustness of the BDDC methods for solving the Helmholtz systems, the selection of good primal constraints should be necessary.

The main objective of this paper is to propose an adaptive BDDC preconditioner.
Adaptive BDDC preconditioner is an advanced BDDC method using a transformation of basis, the
primal unknowns are always selected by solving some generalized eigenvalue problems with respect to the local problems, and adaptively depends on a given tolerance \cite{MS2007,DP2012,PD2013,DSO2015,KC2015,KCW2017,Z2016,KRR2016,JGC2016,PC2016,KCX2017,DV2017,PSW2017}.
Since these local problems can indicate the bad behavior of the standard coarse problem,
they can be used to select the primal constraints to enhance the convergence of the iteration \cite{KCW2017}.
%
%
However, there is an undeniable fact that the number of primal unknowns increases as the number of subdomains increases,
the corresponding coarse problem will become too large and hard to solve directly.
This leads to multilevel extension of this algorithm naturally \cite{Dohrmann2003,T20071,T20072,MSD2008,ZT2017}.

In this paper, we will develop an adaptive BDDC preconditioner. To be more specific,
we will extend the existing methods in \cite{KCW2017,KCX2017,PSW2017} to  PWLS discretization of Helmholtz equation with high and various wave numbers,
and present a complete theory.
Contrast to the classical FEM, the dofs in the PWLS method are defined on elements rather than vertices or edges,
we thus introduce a kind of special ``interface",
which is different from the existing nonoverlapping DD methods.
%
%
%
Since the PWLS discrete system consists of complex coefficients, we construct our transformation operators by using a series of local generalized eigenvalue problems with respect to the parallel sum and the primal constraints are formed by the eigenvectors with their complex modulus of eigenvalues greater than a given tolerance $\Theta$, which is different from \cite{KCW2017,KCX2017,PSW2017}.
As the spectral condition number of the PWLS discretizations of the Helmholtz equations with high wave numbers
grows with the increase of the number of plane wave bases in each element and the decrease of the grid size \cite{HMP2016},
some local techniques are introduced to overcome this difficulty.
Then, by introducing some other auxiliary spaces and operators, we arrive at our two-level adaptive BDDC algorithm in variational framework for PWLS discretizations.
The condition number bound of the two-level adaptive BDDC preconditioned systems, $C\Theta$, can be derived by using the properties of the auxiliary spaces and involved operators, where $C$ is a constant which depends only on the maximum number of interfaces per subdomain. Compared with the previous work for mortar discretizations in \cite{PSW2017}, the variational framework in this paper is more concise.

We perform numerical experiments for various model problems.
These results verify the correctness of theoretical results, and show that our two-level adaptive BDDC algorithms are scalability with respect to the angular frequency, the number of subdomains and mesh size.
It is worth pointing out that the algorithm with deluxe scaling matrices has more advantages over the algorithm with multiplicity scaling matrices in the size of coarse problem even for model problem with constant medium, which is different from the adaptive BDDC algorithms for the two-order elliptic problems.


However, since the number of primal unknowns increase as the wave number or the number of subdomains increase,
we attempt to construct a multilevel adaptive BDDC algorithm to resolve the bottleneck in solving large scale coarse problem.
The coarser subdomains are gathered by a certain amount of subdomains at the finer level, and the new ``interface" can be obtained naturally.
Numerical results show that the multilevel algorithm can reduce the size of the coarse problem, and it is also robust to solve the Helmholtz equations with high wave number.

The rest of this paper is organized as follows. In Section 2, The PWLS formulation will be presented for a Helmholtz equation with Robin boundary condition.
In Section 3, we will firstly derive the Schur complete variational problem and its corresponding function space, introduce some auxiliary spaces and dual-primal basis functions further, and then illustrate our two-level and multilevel adaptive BDDC algorithms. An estimate of the condition numbers will be analyzed in Section 4, and various numerical experiments are presented to verify the performance of our algorithm in Section 5.
Finally, we will give a conclusion in Section 6.

\section{Weighted plane wave least squares formulation}\label{sec:2}
\setcounter{equation}{0}


\subsection{Model problem}

Let $\Omega \in \mathbb{R}^2$ be a bounded and connected Lipschitz domain with boundary $\partial \Omega$.
Consider the Helmholtz equation with Robin boundary condition (\cite{HY2014,HL20171})
\begin{eqnarray}\label{model equation}
\left\{
\begin{array}{rcll}
 -\Delta u  - \kappa^2 u &=&  0        &in~ \Omega,\\
 (\partial_{\bf n} + i \kappa )u &=&  g    &    on~ \partial \Omega,
\end{array}
\right.
\end{eqnarray}
where $i = \sqrt{-1}$, $\partial_{\bf n}$ and $\kappa$ are separately the imaginary unit, the outer normal derivative and the wave number, $g \in L^2(\Omega)$.
The wave number $\kappa = \omega/c > 0$, where $\omega$ and $c$ are separately called the angular frequency and the wave speed.
The above problem is usually seen as an approximation of the acoustic scattering problem, and the wave speed $c$ (and hence $\kappa$)  can be a constant or variable function. For $g = 0$, the second equation of \eqref{model equation} becomes a general representation of an absorbing boundary condition \cite{FWT2004}.

\subsection{Weighted plane wave least squares discretization}

Following Hu and Zhang \cite{HZ2016}, we first define a quadrilateral mesh $\mathcal{T}_h$, namely, dividing $\Omega$ into
$$
\bar{\Omega} = \bigcup\limits_{k=1}^{N_h} \bar{\Omega}_k,
$$
where the quadrilateral elements $\{\Omega_k\}$ satisfy that $\Omega_m \cap \Omega_l = \emptyset, m \neq l$, $h_k$ is the size of $\Omega_k$ and $h = \max\limits_{1\le k \le N_h} h_k$.
Define
\begin{equation*}
\begin{array}{l}
\gamma_{kj} = \partial \Omega_k \cap \partial \Omega_j,~~ \mbox{for}~k,j = 1,\cdots,N_h ~\mbox{and}~ k\neq j, \\
\gamma_k = \partial \Omega_k \cap \partial \Omega,~~\mbox{for}~k=1,\cdots,N_h,\\
\mathcal{F}_B = \bigcup\limits_{k=1}^{N_h} \gamma_k,~~\mathcal{F}_I = \bigcup\limits_{k \neq j} \gamma_{kj}.
\end{array}
\end{equation*}

Throughout this paper, we assume that each
$\kappa_k:= \kappa|_{\Omega_k}$ is a constant. Let $V(\Omega_k)$ be the local space whose members satisfy the
homogeneous Helmholtz's equation \eqref{model equation} on $\Omega_k$:
\begin{equation*}
V(\Omega_k) = \{v_k \in H^1({\Omega_k}):~\Delta v_k + \kappa_k^2 v_k = 0\},~k=1,\cdots,N_h.
\end{equation*}
Define the global space
$$
V({\mathcal{T}_h}) = \bigcup_{k=1}^{N_h} V(\Omega_k).
$$

As we all know,
problem \eqref{model equation} to be solved is equivalent to find the local solution
$u_k: = u|_{\Omega_k} \in \{v \in H^1({\Omega_k}):~\nabla v \in H(div;\Omega_k)\}$
such that
\begin{eqnarray}\label{submodel}
\left\{
\begin{array}{rclll}
    -\Delta u_k  -\kappa_k^2 u_k &=&  0    & in ~\Omega_k, & \\
     (\partial_{\bf n} + i\kappa_k )u_k&=&  g    & on~ \gamma_k, &
\end{array}
\right. k=1,2,\cdots,N_h,
\end{eqnarray}
with the continuity conditions for $u$ and its normal derivative on the interfaces between the
elements:
\begin{eqnarray}\label{interface condition}
u_k  - u_j  =  0,~ \partial_{{\bf n}_k}u_k +  \partial_{{\bf n}_j}u_j =   0, ~\mbox{on}~\gamma_{kj}, ~k,j = 1,\cdots,N_h~and~k\neq j.
\end{eqnarray}

In the weighted plane wave least squares (PWLS) formulation, a finite dimensional subspace of  $V({\mathcal T_h})$ is introduced,
\begin{eqnarray}\label{Vp-space}
V_p({\mathcal T_h}) =span\{\varphi_{m,l}:~1 \le l \le p, 1\le m \le N_h\},
\end{eqnarray}
where
\begin{equation*}
\varphi_{m,l}(\bx)=
\left\{
\begin{array}{ll}
y_{m,l}(\bx)     &\bx \in \bar{\Omega}_m\\
     0          &\bx \in \Omega \backslash \bar{\Omega}_m
\end{array}
\right. ,
\end{equation*}
here $y_{m,l} (l=1,\cdots,p)$ denote the wave shape functions on $\Omega_m$, which satisfy
\begin{equation*}
\left\{
\begin{array}{rcl}
 y_{m,l}(\bx) &=& e^{i\kappa(\bx \cdot\balpha_l)},~\bx \in \bar{\Omega}_m,\\
 |\balpha_l| &=& 1, \\
   \balpha_l &\neq& \balpha_s,~\mbox{for}~l \neq s,
\end{array}
\right.
\end{equation*}
and $\balpha_l$ $(l = 1,\cdots,p)$ are unit wave propagation directions.
In particular, during numerical simulations, we set
\begin{equation*}
\balpha_l = \left(
  \begin{array}{c}
    \cos(2\pi(l-1)/p)\\
    \sin(2\pi(l-1)/p)\\
  \end{array}
\right).
\end{equation*}

By using the plane wave finite dimensional space $V_p(\mathcal{T}_h)$ defined above, the PWLS formulation associated with problem \eqref{submodel} and \eqref{interface condition} can be described as follows: find $u \in V_p({\mathcal T_h})$ such that
\begin{eqnarray}\label{115-dis}
a(u,v) =  \mathcal{L}(v),~\forall v \in V_p({\mathcal T_h}),
\end{eqnarray}
where
\begin{eqnarray}\nonumber
  a(u,v) &=& \sum_{j\neq k} (\alpha_{kj}\int_{\gamma_{kj}} (u_k-u_j)\cdot\overline{(v_k-v_j)}ds
+\beta_{kj}\int_{\gamma_{kj}}
({\partial_{{\bf n}_k}u_k+\partial_{{\bf n}_j}u_j})\cdot\overline{(\partial_{{\bf n}_k}v_k+\partial_{{\bf n}_j}v_j)}ds)
  \\ \label{115-1}
   && + \sum_{k=1}^{N_h} \theta_k \int_{\gamma_k} ((\partial_{\bf n}+i\kappa_k ){u}_k) \cdot \overline{(\partial_{\bf n}+i \kappa_k){v}_k}ds,\\\label{115-2}
\mathcal{L}(v) &=& \sum_{k=1}^{N_h} \theta_k \int_{\gamma_k} g\cdot\overline{(\partial_{\bf n}+i \kappa_k ){v}_k}ds,
\end{eqnarray}
here $\overline{\diamond}$ denotes the complex conjugate of the complex quantity $\diamond$, the Lagrange multipliers
$$\alpha_{kj} = h^{-1} + \kappa_{kj},~\beta_{kj} = h^{-1} \kappa_{kj}^{-2} + \kappa_{kj}^{-1}~ \mbox{with}~\kappa_{kj} = (\kappa_k + \kappa_j)/2,$$
and
$$\theta_k = h^{-1} \kappa_k^{-2} + \kappa_k^{-1}.$$

This discrete variational problem \eqref{115-dis} is derived by the minimization of a quadratic functional,
 and the basic idea of the minimization problem is to find a function in $V_p(\mathcal{T}_h)$ so that it can satisfy the external boundary conditions and the interface conditions as far as possible \cite{HZ2016}. From Theorem 3.1 of \cite{HY2014}, we can see that the continuous variational problem associate with \eqref{115-dis} is equivalent to the reference problem \eqref{submodel} and \eqref{interface condition}.

It is clear that $a(\cdot,\cdot)$ is sesquilinear and Hermitian, and similar to  the proof of Theorem 3.1 in \cite{HY2014}, we can see that $a(v,v) \ge 0$, and $a(v,v) = 0$ for $v \in V(\mathcal{T}_h)$ if and only if $v = 0$. Therefore,  $a(\cdot,\cdot)$ is Hermitian positive definite (HPD).

Due to the linear system obtained in \eqref{115-dis} are large and highly ill-conditioned when the
wave number is large, it's necessary to study a fast solver for this system.
Adaptive BDDC algorithm is a novel domain decomposition (DD) method with enriched coarse spaces \cite{MS2007},
this algorithm has been successfully applied to solve discrete systems obtained by various discretization methods, such as conforming Galerkin \cite{KC2015},
discontinuous Galerkin \cite{KCX2017}, and mortar methods \cite{PSW2017} and so on.  However, adaptive BDDC
algorithm for PWLS discretizations has not previously been discussed in the literature.
Here we will extend the adaptive BDDC algorithm to PWLS discretizations with high and various wave numbers.



\section{Adaptive BDDC preconditioner}\setcounter{equation}{0}

\subsection{Domain decomposition and Schur complement problem}

Differ from the discretizations which dofs are defined on the vertices or edges of the mesh,
the dofs in the PWLS discretization are defined on the elements,
therefore, we need to introduce a special interface and domain decomposition firstly.

Let $\{D_r\}_{r=1}^{N_d}$ be a non-overlapping subdomain partiaon of $\Omega$ and each $D_r$ consists of several complete elements and part of the elements in $\mathcal{T}_h$ (see
Figure \ref{fig-dd-1}).
Let $\mathcal{T}_d$ denote the coarse partition associated with the subdomains $D_1, D_2, \cdots, D_{N_d}$.

\begin{figure}[H]
  \centering
  \includegraphics[width=0.3\textwidth]{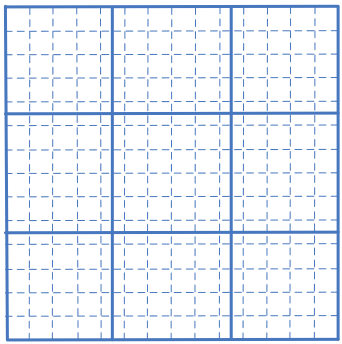}\\
  \caption{Element: the small square with dotted line boundary, subdomain: the square
with solid line boundary.}\label{fig-dd-1}
\end{figure}

For any given $r = 1,\cdots, N_d$, define
\begin{align*}
\mathcal{F}_{I}^{(r)} = \{\tilde{\gamma}_{kj}:~ \tilde{\gamma}_{kj} = \gamma_{kj}|_{\bar{D}_r},~ \forall \gamma_{kj} \in \mathcal{F}_I\},~
\mathcal{F}_{B}^{(r)} = \{\tilde{\gamma}_k:~ \tilde{\gamma}_k = \gamma_{k}|_{\bar{D}_r},~ \forall \gamma_{k} \in \mathcal{F}_B\}.
\end{align*}
From this definition, the sesquilinear form $a(\cdot,\cdot)$ defined in \eqref{115-1} can be rewrite as
\begin{eqnarray}\label{115-1-0}
  a(u,v) = \sum\limits_{r=1}^{N_d} a_r (u,v),
\end{eqnarray}
where
\begin{eqnarray}\nonumber
a_r(u,v) &=& \sum_{\tilde{\gamma}_{kj} \in \mathcal{F}_I^{(r)}} (\alpha_{kj}\int_{\tilde{\gamma}_{kj}} (u_k-u_j) \cdot \overline{(v_k-v_j)}ds + \beta_{kj}\int_{\tilde{\gamma}_{kj}}
({\partial_{{\bf n}_k}u_k+\partial_{{\bf n}_j}u_j}) \cdot \overline{(\partial_{{\bf n}_k}v_k+\partial_{{\bf n}_j}v_j)}ds)
  \\ \label{def-AUV}
   &&+ \sum_{\tilde{\gamma}_k \in \mathcal{F}_B^{(r)}} \theta_k \int_{\tilde{\gamma}_k} ((\partial_{\bf n}+iw ){u}_k) \cdot\overline{(\partial_{\bf n}+i \omega){v}_k}ds.
\end{eqnarray}
and it is easy to verify that $a_r(\cdot,\cdot)$ is Hermitian positive semi-definite.

If $\partial D_r \cap \partial D_j(r \neq j)$ is a common edge, we call it an interface $\Gamma_{rj}$ (or $\Gamma_{jr}$).
Let $\Gamma=\cup\Gamma_{rj}$, $N_f$ denotes the number of interfaces.
Since there is one-to-one correspondence between
any given $\Gamma_{rj}$ and the interface, we denote $\{\Gamma_{rj}\}$ by $\{F_k: k=1,\cdots,N_f\}$ for convenience.
Denote $\{v_k: k=1,\cdots,N_v\}$ be the set of the vertices corresponding to $\mathcal{T}_d$.
For each $D_r (r = 1,\cdots,N_d)$, let
\begin{eqnarray}\label{def-M-i}
&& \mathcal{M}_r := \{k:~F_k \subset \partial D_r \backslash \partial \Omega,~\mbox{for}~1\le k \le N_f\},
\\\label{def-M-i-v}
&& \mathcal{M}_r^c := \{k:~v_k \subset \partial D_r \backslash \partial \Omega,~\mbox{for}~1\le k \le N_v\}.
\end{eqnarray}
For sake of argument, we set the elements of $\mathcal{M}_r$ as $\{r_m:~1\le m \le f_r\}$, where $f_r$ denotes the size of $\mathcal{M}_r$, i.e.
the number of interfaces on $\partial \Omega_r \backslash \partial \Omega$.

Set
\begin{align*}
\mathcal{D}_r &:= \{m:~\Omega_m \cap D_r \neq \emptyset,~\mbox{for}~1 \le m \le N_h\},~1 \le r \le N_d,\\
\mathcal{I}_r &:= \{m:~\Omega_m \subset D_r,~\mbox{for}~1 \le m \le N_h\},~1 \le r \le N_d,\\
\mathcal{V}_k &:= \{m:~\Omega_m \cap v_k \neq \emptyset,~\mbox{for}~1 \le m \le N_h\},~1 \le k \le N_v,\\
\mathcal{F}_k &:= \{m:~\Omega_m \cap F_k \neq \emptyset,~\mbox{for}~1 \le m \le N_h~\mbox{and}~m \notin \mathcal{V}_l,~1 \le l \le N_v\},~1 \le k \le N_f.
\end{align*}
and (see Figure \ref{fig-Dv-DF} for illustration)
\begin{align*}
D_{v_k} = \bigcup\limits_{m \in \mathcal{V}_k} \Omega_m~\mbox{and}~D_{F_k} = \bigcup\limits_{m \in \mathcal{F}_k} \Omega_m.
\end{align*}
Then the special interface subdomain can be defined as
\begin{align*}
D_{\Gamma} = \left(\bigcup \limits_{k=1}^{N_v} D_{v_k}\right) \bigcup \left(\bigcup \limits_{k=1}^{N_f} D_{F_k}\right).
\end{align*}
\begin{figure}[H]
  \centering
  \includegraphics[width=0.3\textwidth]{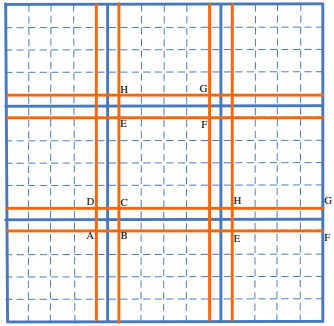}\\
  \caption{The rectangle ABCD and EFGH are separately denote the subdomains $D_{v_k}$ and $D_{F_k}$.}\label{fig-Dv-DF}
\end{figure}

Based on the aforementioned domain decomposition, the Schur complement problem of \eqref{115-dis} can be introduced.

Let the local spaces
\begin{eqnarray}\label{def-V-1}
&& V_I = \oplus_{r=1}^{N_d}V^{(r)}_I, ~V^{(r)}_I = span\{\varphi_{m,l}:~1 \le l \le p,~\forall m \in \mathcal{I}_r\},~1 \le r \le N_d,\\\nonumber
&& V_{k,c} = span\{\varphi_{m,l}:~1 \le l \le p,~\forall m\in \mathcal{V}_k\},~1 \le k \le N_v,
\end{eqnarray}
and
\begin{align*}
V_k = span\{\varphi_{m,l}:~1 \le l \le p,~\forall m \in \mathcal{F}_k\},~1 \le k \le N_f.
\end{align*}

For any interface $F_k$, we always assume that it is a common edge of two subdomains $D_r$ and $D_j$.
Let $n_k = p |\mathcal{F}_k|$, where $|\diamond|$ denotes the size of set $\diamond$. Define a vector
\begin{eqnarray}\label{def-Phik}
\Phi^k = (\phi^{k}_{1},\cdots,\phi^{k}_{n_k})^T:=(\phi^{k}_{m,1},\cdots,\phi^{k}_{m,p}, \forall m \in \mathcal{F}_k)^T,
\end{eqnarray}
where $\phi^{k}_{m,l} \in V_I^{(r)} \oplus V_I^{(j)}\oplus V_k (1 \le l \le p, \forall m\in \mathcal{F}_k)$ satisfy
\begin{eqnarray}\label{def-Hw}
\left\{\begin{array}{l}
a(\phi^{k}_{m,l}, v) = 0, ~~\forall v \in V_I,\\
\phi^{k}_{m,l}|_{\bar{D}_{F_k}} = \varphi_{m,l}|_{\bar{D}_{F_k}}.
\end{array}\right.
\end{eqnarray}

For any vertex $v_k$, let
\begin{eqnarray}\label{def-mathcalN-k}
\mathcal{N}_k = \{r:~\partial D_r \cap v_k \neq \emptyset, 1\le r\le N_d\}, ~n_k^c = p|\mathcal{V}_k|,
\end{eqnarray}
we can define another vector
\begin{eqnarray}\label{def-Phik-v}
\Psi^{k} = (\psi^{k}_{1},\cdots,\psi^{k}_{n_k^c})^T:=(\psi^{k}_{m,1},\cdots,\psi^{k}_{m,p}, \forall m\in \mathcal{V}_k)^T,
\end{eqnarray}
where $\psi^{k}_{m,l} \in (\oplus_{r \in \mathcal{N}_k} V_I^{(r)}) \oplus V_{k,c} (1 \le l \le p, \forall m\in \mathcal{V}_k)$ satisfy
\begin{eqnarray}\label{def-Hw-v}
\left\{\begin{array}{l}
a(\psi^{k}_{m,l}, v) = 0, ~~\forall v \in V_I,\\
\psi^{k}_{m,l}|_{\bar{D}_{v_k}} = \varphi_{m,l}|_{\bar{D}_{v_k}}.
\end{array}\right.
\end{eqnarray}


Utilizing these two vectors, the function space related to the Schur complement problem can be defined as
\begin{align*}
\hat{W} = (\oplus_{k=1}^{N_f} W_{k}) \oplus (\oplus_{k=1}^{N_v} W_{k,c}),
\end{align*}
where
\begin{align*}
W_{k} = span\{\phi^{k}_{1},\cdots,\phi^{k}_{n_k}\}~\mbox{and}~W_{k,c} = span\{\psi^{k}_{1},\cdots,\psi^{k}_{n_k^c}\}.
\end{align*}

Let $\hat{S}: \hat{W} \rightarrow \hat{W}$ be the Schur complement operator defined by
\begin{eqnarray}\label{def-operator-hat-S}
(\hat{S} \hat{u}, \hat{v}) = a(\hat{u}, \hat{v}),~~\forall \hat{u}, \hat{v} \in \hat{W}.
\end{eqnarray}
Due to the coercive of the restriction of $a(\cdot,\cdot)$ on $\hat{W}$, $\hat{S}$ is a HPD operator.

Then the Schur complement variational form of \eqref{115-dis} can be expressed as:
find $\hat{w} \in \hat{W}$ such that
\begin{eqnarray}\label{def-schur-system}
(\hat{S} \hat{w}, \hat{v}) = \mathcal{L}(\hat{v}),~~\forall \hat{v} \in \hat{W}.
\end{eqnarray}

In order to give an adaptive BDDC preconditioner for solving the Schur complement problem \eqref{def-schur-system},
we will give some preparations in next subsection.

\subsection{Some auxiliary spaces and dual-primal bases}\label{sec:3}

The BDDC algorithm consist of several independent subdomain problems and one global coarse problem.
The function spaces associated with these problems are spanned by the so-called dual basis functions and primal basis functions.
In the following, we will introduce some auxiliary spaces and dual-primal bases.

For each subdomain $D_r$, let truncated basis functions
\begin{equation*}
\varphi_{m,l}^{(r)}(\bx)=
\left\{
\begin{array}{ll}
\varphi_{m,l}(\bx)    &\bx \in \bar{\Omega}_m \cap \bar{D}_r\\
     0          &\bx \in \Omega \backslash (\bar{\Omega}_m \cap \bar{D}_r)
\end{array}
\right. , ~1\le l \le p, m \in \mathcal{D}_r.
\end{equation*}
Define the corresponding local spaces
\begin{align*}
& V_{k,c}^{(r)} = span\{\varphi_{m,l}^{(r)}:~1 \le l \le p,~\forall m \in \mathcal{V}_k\},~\mbox{for}~k \in \mathcal{M}_r^c,\\
& V_k^{(r)} = span\{\varphi_{m,l}^{(r)}:~1 \le l \le p,~\forall m \in \mathcal{F}_k\},~\mbox{for}~k \in \mathcal{M}_r,
\end{align*}
and
\begin{align*}
V^{(r)} = V^{(r)}_I \oplus (\oplus_{k\in \mathcal{M}_r} V_k^{(r)}) \oplus (\oplus_{k\in \mathcal{M}_r^c} V_{k,c}^{(r)}),
\end{align*}
where $V^{(r)}_I$ is defined in \eqref{def-V-1}.

Using the above-mentioned local spaces, some new set of basis functions and corresponding spaces can be constructed firstly.

For any given interface $F_k = \partial D_r \cap \partial D_j$, denote
\begin{eqnarray}\label{def-Phi-Psi-k-nu}
\Phi^{k,\nu} =  (\phi^{k,\nu}_{m,1}, \cdots, \phi^{k,\nu}_{m,p}, \forall m\in \mathcal{F}_k)^T~\mbox{and}~\bar{\Phi}^{k,\nu} = (\bar{\phi}^{k,\nu}_{m,1}, \cdots, \bar{\phi}^{k,\nu}_{m,p}, \forall m\in \mathcal{F}_k)^T,~\nu=r,j,
\end{eqnarray}
where $\phi^{k,\nu}_{m,l} \in V_I^{(\nu)} \oplus V_k^{(\nu)}$ and $\bar{\phi}^{k,\nu}_{m,l} \in V^{(\nu)}$ separately satisfy
\begin{eqnarray}\label{def-Hw-i}
\left\{\begin{array}{l}
a_{\nu}(\phi^{k,\nu}_{m,l}, v) = 0, ~~\forall v \in V^{(\nu)}_I\\
\phi^{k,\nu}_{m,l}|_{\bar{D}_{F_k}} = \varphi^{(\nu)}_{m,l}|_{\bar{D}_{F_k}}
\end{array}\right.,~~1 \le l \le p, m \in \mathcal{F}_k,
\end{eqnarray}
and
\begin{eqnarray}\label{def-barHw-i-Fim}
\left\{\begin{array}{l}
a_{\nu}(\bar{\phi}^{k,\nu}_{m,l}, v) = 0, ~~\forall v \in V^{(\nu)} \backslash V_k^{(\nu)}\\
\bar{\phi}^{k,\nu}_{m,l}|_{\bar{D}_{F_k}} = \varphi^{(\nu)}_{m,l}|_{\bar{D}_{F_k}}
\end{array}
\right.,~~1 \le l \le p, m \in \mathcal{F}_k.
\end{eqnarray}

For any given vertex $v_k$, denote
\begin{eqnarray}\label{def-Psi-k-nu-c}
\Psi^{k,\nu} = (\psi^{k,\nu}_{m,1}, \cdots, \psi^{k,\nu}_{m,p}, \forall m\in \mathcal{V}_k)^T,~\nu \in \mathcal{N}_k,
\end{eqnarray}
where $\psi^{k,\nu}_{m,l} \in V_I^{(\nu)} \oplus V_{k,c}^{(\nu)}$ satisfy
\begin{eqnarray}\label{def-Hw-psi-c}
\left\{\begin{array}{l}
a_{\nu}(\psi^{k,\nu}_{m,l}, v) = 0, ~~\forall v \in V^{(\nu)}_I\\
\psi^{k,\nu}_{m,l}|_{\bar{D}_{v_k}} = \varphi^{(\nu)}_{m,l}|_{\bar{D}_{v_k}}
\end{array}\right.,~~1 \le l \le p, m \in \mathcal{V}_k.
\end{eqnarray}
and $\mathcal{N}_k$ is defined in \eqref{def-mathcalN-k}.

From the following lemma, we know that the basis functions $\phi^{k,\nu}_{m,l}$ and $\bar{\phi}^{k,\nu}_{m,l}$ $(1\le l \le p, \forall m \in \mathcal{F}_k)$ are available.
\begin{lemma}\label{lemma-ar}
The restriction of $a_r(\cdot,\cdot)$ $(r=1,\cdots,N_d)$ on $V^{(\nu)} \backslash V_k^{(\nu)}$ or $V_I^{(\nu)}$ is HPD.
\end{lemma}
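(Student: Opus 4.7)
Since every summand in \eqref{def-AUV} is a positively weighted squared modulus, $a_r(\cdot,\cdot)$ is automatically Hermitian and positive semi-definite on the whole of $V^{(r)}$; only the strictness needs to be verified on the two specified subspaces. The plan is to extract pointwise zero-Cauchy data from the hypothesis $a_\nu(v,v)=0$ on a non-empty layer of elements of $D_\nu$ where $v$ is forced to vanish, and then propagate the vanishing across the element adjacency graph of $D_\nu$ by exploiting the analyticity of plane waves.

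Unpacking $a_\nu(v,v)=0$ via \eqref{def-AUV} yields three pointwise identities: $v_m-v_j=0$ and $\partial_{{\bf n}_m}v_m+\partial_{{\bf n}_j}v_j=0$ on every inner edge $\tilde\gamma_{mj}\in\mathcal{F}_I^{(\nu)}$, together with the Robin trace $(\partial_{\bf n}+i\kappa_m)v_m=0$ on every boundary piece $\tilde\gamma_m\in\mathcal{F}_B^{(\nu)}$. The first two conditions, together with the elementwise Helmholtz equation built into $V_p(\mathcal{T}_h)$, make $v$ effectively an $H^2$ Helmholtz solution across the interior edges of the local mesh in $D_\nu$. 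Next I identify the elements on which $v$ is zero by the definition of the ambient subspace: for $V_I^{(\nu)}$ this is the entire boundary layer $\bigcup_{k\in\mathcal{M}_\nu}\mathcal{F}_k\cup\bigcup_{k\in\mathcal{M}_\nu^c}\mathcal{V}_k$, while for $V^{(\nu)}\setminus V_k^{(\nu)}$ it is precisely the elements indexed by $\mathcal{F}_k$. Both sets are non-empty and lie along the boundary of $D_\nu$.

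For any ``live'' element $\Omega_m$ that shares an inner edge with such a ``trivial'' neighbour, the matching conditions force $v_m=0$ and $\partial_{\bf n} v_m=0$ on that shared edge. Because $v_m=\sum_l c_{m,l}\,e^{i\kappa_m(\bx\cdot\balpha_l)}$ is a finite combination of real-analytic plane-wave solutions of the Helmholtz equation, Holmgren's uniqueness theorem for the Cauchy problem forces $v_m\equiv 0$ on $\Omega_m$, and the linear independence of the $p$ direction-distinct plane waves on $\Omega_m$ then kills every coefficient $c_{m,l}$. Iterating this neighbour-to-neighbour argument along the element adjacency graph of $D_\nu$ spreads the vanishing outward from the trivial layer.

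The step I expect to be most delicate is verifying that the propagation actually exhausts every element of $D_\nu$; this reduces to the combinatorial/geometric fact that each connected component of the element graph of $D_\nu$ must touch the trivial layer. For $V_I^{(\nu)}$ this is immediate because the interior index set $\mathcal{I}_\nu$ is surrounded by the trivially-zero boundary layer. For $V^{(\nu)}\setminus V_k^{(\nu)}$ one starts from $\mathcal{F}_k$, propagates inward through $\mathcal{I}_\nu$, and then outward again to the remaining $\mathcal{F}_{k'}$ ($k'\neq k$) and $\mathcal{V}_{k'}$. The Robin identity on $\mathcal{F}_B^{(\nu)}$ is kept in reserve to handle any component that abuts only $\partial\Omega$ and would otherwise stall the argument, in the spirit of the uniqueness proof underlying Theorem~3.1 of \cite{HY2014}.
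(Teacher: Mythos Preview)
Your approach is essentially the same as the paper's: both arguments start from the layer of elements on which membership in the subspace forces $v$ to vanish, and then propagate the vanishing element-by-element across the adjacency graph of $D_\nu$ using the interface identities extracted from $a_\nu(v,v)=0$.

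The only noteworthy difference is in how the single-element step is justified. The paper uses only the Dirichlet jump condition to get $u_j=0$ on one edge $\tilde\gamma_{mj}$ and then invokes ``linear independence of the basis functions $\varphi_{j,l}$'' to kill all coefficients. You instead keep both matching conditions, obtain vanishing Cauchy data on the shared edge, and invoke Holmgren. Your route is a bit heavier but more transparently correct: the restrictions of the $p$ plane waves to a single straight edge are \emph{not} always linearly independent (directions symmetric about the edge have identical tangential phase), so the paper's argument tacitly needs the Neumann trace as well, which is exactly what your Cauchy-data formulation supplies. Your connectivity discussion is also more explicit than the paper's one-line ``applying this process recursively,'' and your fallback to the Robin identity for components abutting only $\partial\Omega$ is a sensible safeguard, though for the rectangular partitions considered here it is not needed.
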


\begin{proof}
First, from the definition \eqref{def-AUV} of $a_r(\cdot,\cdot)$, it's easy to verify that $a_r(u,u) \ge 0$ for any $u \in U$ ($U = V^{(\nu)} \backslash V_k^{(\nu)} ~\mbox{or}~V_I^{(\nu)}$).

Next, we show $a_r(u,u) = 0$ holds if and only if $u=0$. For simplicity, we only give the proof of the case $U = V^{(\nu)} \backslash V_k^{(\nu)}$,
it can be proved in a similar way if $U = V^{(\nu)}_I$. 

In fact, $a_r(u,u) = 0$ is equivalent to
\begin{align*}
& \sum_{\tilde{\gamma}_{mj} \in \mathcal{F}_I^{(r)}} (\alpha_{mj}\int_{\tilde{\gamma}_{mj}} |u_m-u_j|^2 ds
+ \beta_{mj}\int_{\tilde{\gamma}_{mj}} |{\partial_{{\bf n}_m}u_m+\partial_{{\bf n}_j}u_j}|^2 ds) \\
& + \sum_{\tilde{\gamma}_m \in \mathcal{F}_B^{(r)}} \theta_m \int_{\tilde{\gamma}_m} |(\partial_{\bf n}+iw ){u}_m|^2 ds = 0.
\end{align*}
Note that $\alpha_{mj}, \beta_{mj}, \theta_m >0$, the above equality implies that
\begin{align*}
\int_{\tilde{\gamma}_{mj}} |u_m-u_j|^2 ds = 0,~\int_{\tilde{\gamma}_{mj}} |{\partial_{{\bf n}_m}u_m+\partial_{{\bf n}_j}u_j}|^2 ds = 0,~\int_{\tilde{\gamma}_m} |(\partial_{\bf n}+iw ){u}_m|^2 ds = 0.
\end{align*}

From the definition of space $V^{(\nu)} \backslash V_k^{(\nu)}$, we have
\begin{align*}
u|_{D_{F_k}} = 0,~\forall u \in V^{(\nu)} \backslash V_k^{(\nu)}.
\end{align*}
It implies that if $\tilde{\gamma}_{mj} \subset \partial D_{F_k}$ and  $m \in \mathcal{F}_k$, we have
\begin{align*}
\int_{\tilde{\gamma}_{mj}} |u_m - u_j|^2 ds = 0~\Leftrightarrow~ \int_{\tilde{\gamma}_{mj}} |u_j|^2 ds = 0.
\end{align*}
Namely,
\begin{align*}
u_j = \sum\limits_{l=1}^p u_{j,l} \varphi_{j,l} = 0,~\mbox{on}~\tilde{\gamma}_{mj},
\end{align*}
where $u_{j,l} \in \mathbb{C} (l=1,\cdots,p)$.

From this and based on the linear independence of basis functions $\varphi_{j,l}(l=1,\cdots,p)$, we obtain
\begin{align*}
u_{j,l} = 0,~l=1,\cdots,p~\Leftrightarrow~u_j = 0,~\mbox{in}~\Omega_j,
\end{align*}
where $j$ satisfies $\tilde{\gamma}_{mj} \subset \partial D_{F_k} \backslash \partial D_r$ and $m \in \mathcal{F}_k$.

Furthermore, applying this process recursively, we can prove that $u = 0~\mbox{in}~D_r$.

Therefore we complete the proof of the coercive of $a_r(\cdot,\cdot)$ in $V^{(\nu)} \backslash V_k^{(\nu)}$.
\end{proof}

From \eqref{def-Hw}, \eqref{def-Hw-v}, \eqref{def-Hw-i} and \eqref{def-Hw-psi-c}, we can easily verify that
\begin{eqnarray}\label{rel-phik-phikv}
\phi^{k,\nu}_{m,l}|_{\bar{D}_{\nu}} = \phi^{k}_{m,l}|_{\bar{D}_{\nu}}
~\mbox{and}~
\psi^{k,\nu}_{m,l}|_{\bar{D}_{\nu}} = \psi^{k}_{m,l}|_{\bar{D}_{\nu}}.
\end{eqnarray}

Similar to $\Phi^k$ and $\Psi^k$ expressed in \eqref{def-Phik} and \eqref{def-Phik-v}, we denote
\begin{align*}
\Phi^{k,\nu} = (\phi^{k,\nu}_{1}, \cdots, \phi^{k,\nu}_{n_k})^T,~\bar{\Phi}^{k,\nu} = (\bar{\phi}^{k,\nu}_{1}, \cdots, \bar{\phi}^{k,\nu}_{n_k})^T,~\nu=r,j,
\end{align*}
and
\begin{align*}
\Psi^{k,\nu} = (\psi^{k,\nu}_{1}, \cdots, \psi^{k,\nu}_{n_k^c})^T,~\nu \in \mathcal{N}_k.
\end{align*}

Using the basis functions defined above, denote the auxiliary spaces
\begin{eqnarray}\label{def-W-k-nu}
 W_{k}^{(\nu)}=span\{\phi^{k,\nu}_{1}, \cdots, \phi^{k,\nu}_{n_k}\},~~
 \bar{W}_{k}^{(\nu)}=span\{\bar{\phi}^{k,\nu}_{1}, \cdots, \bar{\phi}^{k,\nu}_{n_k}\},~~\nu=r,j,
\end{eqnarray}
and
\begin{align*}
 W_{k,c}^{(\nu)}=span\{\psi^{k,\nu}_{1}, \cdots, \psi^{k,\nu}_{n_k^c}\},~~\forall \nu \in \mathcal{N}_k.
\end{align*}

For any given subdomain $D_s$, let
\begin{eqnarray}\label{def-normAi}
|\cdot|^2_{a_s} := a_s(\cdot, \cdot).
\end{eqnarray}
The following lemma can be proved.
\begin{lemma}\label{lemma-3-2}
For any given subdomain $D_s$ and vectors $\{\vec{w}_m \in
\mathbb{C}^{n_m},~m \in \mathcal{M}_s\}$, there have
\begin{eqnarray}\label{4-1-2-tildew}
\sum\limits_{m \in \mathcal{M}_s} |\bar{w}_m^{(s)}|_{a_s}^2 \le f_s |w^{(s)}|_{a_s}^2,~s = 1,\cdots,N_d,
\end{eqnarray}
where
\begin{eqnarray}\label{def-wi-wki}
w^{(s)} = \sum\limits_{m \in \mathcal{M}_s} w_m^{(s)} + \sum\limits_{m \in \mathcal{M}_s^c} w_{m,c}^{(s)},
~w_m^{(s)} = (\vec{w}_m^{(s)})^T \Phi^{m,s}, ~w_{m,c}^{(s)} \in W_{m,c}^{(s)},~\bar{w}_m^{(s)} = (\vec{w}_m^{(s)})^T \bar{\Phi}^{m,s},
\end{eqnarray}
here $\mathcal{M}_s, \mathcal{M}_s^c$ are separately defined in \eqref{def-M-i} and \eqref{def-M-i-v}, $f_s = |\mathcal{M}_s|$.
\end{lemma}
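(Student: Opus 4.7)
The plan is to reduce the sum over $m \in \mathcal{M}_s$ to a pointwise estimate of the form $|\bar{w}_m^{(s)}|_{a_s}^2 \le |w^{(s)}|_{a_s}^2$ for each fixed $m$, and then invoke $|\mathcal{M}_s| = f_s$. This pointwise bound should follow from a minimum-energy principle: by construction \eqref{def-barHw-i-Fim}, $\bar{w}_m^{(s)}$ is $a_s$-orthogonal to $V^{(s)} \setminus V_m^{(s)}$ and has a prescribed trace on $D_{F_m}$, so it is the $a_s$-minimizer in $V^{(s)}$ among all functions sharing that trace. The crux is therefore to show that $w^{(s)}$ carries the very same trace on $D_{F_m}$.

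To do this I would decompose $w^{(s)} = w_m^{(s)} + \sum_{k \in \mathcal{M}_s \setminus \{m\}} w_k^{(s)} + \sum_{k \in \mathcal{M}_s^c} w_{k,c}^{(s)}$ and argue term-by-term. Using \eqref{def-Hw-i}, the first piece $w_m^{(s)}$ restricts on $\bar{D}_{F_m}$ to the prescribed trace determined by $\vec{w}_m^{(s)}$, matching the trace of $\bar{w}_m^{(s)}$ by \eqref{def-barHw-i-Fim}. Every other term has support disjoint from $D_{F_m}$ because the index sets $\mathcal{I}_s$, $\mathcal{V}_k$ (for $k \in \mathcal{M}_s^c$), and $\mathcal{F}_k$ (for $k \in \mathcal{M}_s \setminus \{m\}$) are pairwise disjoint from $\mathcal{F}_m$; the deliberate exclusion of vertex-adjacent elements from each $\mathcal{F}_k$ is precisely what guarantees this. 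Consequently $w^{(s)}|_{D_{F_m}} = \bar{w}_m^{(s)}|_{D_{F_m}}$, so $z := w^{(s)} - \bar{w}_m^{(s)}$ vanishes on $D_{F_m}$. The same support analysis shows that any $V^{(s)}$-function vanishing on $D_{F_m}$ lies in $V^{(s)} \setminus V_m^{(s)}$, placing $z$ in the orthogonal complement required by the defining condition \eqref{def-barHw-i-Fim}.

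With this in hand, the Hermitian symmetry of $a_s$ together with $a_s(\bar{w}_m^{(s)}, z) = 0$ expands $|w^{(s)}|_{a_s}^2 = |\bar{w}_m^{(s)}|_{a_s}^2 + |z|_{a_s}^2$, and Lemma \ref{lemma-ar} guarantees $|z|_{a_s}^2 \ge 0$; dropping this nonnegative remainder and summing over $m \in \mathcal{M}_s$ delivers the stated inequality with the constant $f_s$. The main technical obstacle is the trace-matching step: one must exploit the precise definitions of $\mathcal{I}_s$, $\mathcal{F}_k$, and $\mathcal{V}_k$ to confirm that the element collections $\{D_{F_k}\}_{k \in \mathcal{M}_s}$, $\{D_{v_k}\}_{k \in \mathcal{M}_s^c}$, and $\{\Omega_j\}_{j \in \mathcal{I}_s}$ partition the elements of $D_s$ without overlap, so that only the $m$-th interface block contributes to the trace on $D_{F_m}$. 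Once that disjointness is established, the remaining algebra is essentially the classical orthogonal-complement energy identity adapted to the complex Hermitian form $a_s$.
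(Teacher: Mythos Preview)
Your proposal is correct and follows essentially the same approach as the paper's proof: both fix $m \in \mathcal{M}_s$, write $w^{(s)} = \bar{w}_m^{(s)} + \eta^{(s)}$ with $\eta^{(s)} \in V^{(s)} \setminus V_m^{(s)}$, invoke the $a_s$-orthogonality from \eqref{def-barHw-i-Fim} to obtain the Pythagorean identity $|w^{(s)}|_{a_s}^2 = |\bar{w}_m^{(s)}|_{a_s}^2 + |\eta^{(s)}|_{a_s}^2$, drop the nonnegative remainder via Lemma~\ref{lemma-ar}, and sum over $m$. The paper simply asserts $\eta^{(s)} \in V^{(s)} \setminus V_m^{(s)}$ from the explicit decomposition, whereas you supply the supporting support-disjointness argument; otherwise the two arguments coincide.
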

\begin{proof}
For each $m \in \mathcal{M}_s$, by using \eqref{def-wi-wki}, we obtain
\begin{eqnarray}\label{def-w-i-1}
 w^{(s)}
= w_{m}^{(s)} + \sum\limits_{\alpha \in \mathcal{M}_s \atop \alpha \neq m} w_{\alpha}^{(s)} + \sum\limits_{\alpha \in \mathcal{M}_s^c} w_{\alpha,c}^{(s)}
= \bar{w}_{m}^{(s)} + \mathbf{\eta}^{(s)},
\end{eqnarray}
where
\begin{eqnarray}\label{w-i-1-etai}
\mathbf{\eta}^{(s)} = \sum\limits_{\alpha \in \mathcal{M}_s \atop  \alpha \neq m} w_{\alpha}^{(s)} + \sum\limits_{\alpha \in \mathcal{M}_s^c} w_{\alpha,c}^{(s)} + (w_{m}^{(s)} - \bar{w}_{m}^{(s)}) \in V^{(s)} \backslash
V_{m}^{(s)}.
\end{eqnarray}

Therefore, using \eqref{def-w-i-1}, \eqref{def-wi-wki}, \eqref{w-i-1-etai}, \eqref{def-barHw-i-Fim} and Lemma \ref{lemma-ar}, we have
\begin{align*}
a_s(w^{(s)}, w^{(s)})
= a_s(\bar{w}_{m}^{(s)} + \mathbf{\eta}^{(s)},\bar{w}_{m}^{(s)} + \mathbf{\eta}^{(s)})
=a_s(\bar{w}_{m}^{(s)},\bar{w}_{m}^{(s)}) + a_s(\mathbf{\eta}^{(s)}, \mathbf{\eta}^{(s)})
\ge a_s(\bar{w}_{m}^{(s)},\bar{w}_{m}^{(s)}).
\end{align*}
From this and the definition \eqref{def-normAi} of $|\cdot|^2_{a_s}$, we can complete the proof of \eqref{4-1-2-tildew}.
\end{proof}

Then, for any given $\nu=r,j$, we introduce the scaling operator $D_{F_{k}}^{(\nu)}: U \rightarrow U (U = W_k, W_{k}^{(r)}~\mbox{or}~W_{k}^{(j)})$ or scaling matrix $\vec{D}_{F_k}^{(\nu)}  \in \mathbb{C}^{n_k\times n_k}$, which satisfy that for all $w = \vec{w}^T \Psi $ with $\vec{w} \in \mathbb{C}^{n_k}$ and $\Psi = \Phi^k, \Phi^{k,r}~\mbox{or}~\Phi^{k,j}$, we have
\begin{eqnarray}\label{def-D-operator-matrix}
D_{F_{k}}^{(\nu)} w = \vec{w}^T (\vec{D}_{F_{k}}^{(\nu)})^T \Psi,
\end{eqnarray}
where $\vec{D}_{F_{k}}^{(\nu)}$ is nonsingular, and
\begin{eqnarray}\label{def-D-trans}
D_{F_{k}}^{(r)} + D_{F_{k}}^{(j)}  = I,
\end{eqnarray}
here $I$ is the identity operator.

Two commonly used scaling matrices $\vec{D}_{F_{k}}^{(\nu)}(\nu=r,j)$ 
are the multiplicity scaling matrices
\begin{eqnarray}\label{DiFD}
\vec{D}^{(r)}_{F_k}=\frac{1}{2}\vec{I},~\vec{D}^{(j)}_{F_k}=\frac{1}{2}\vec{I},
\end{eqnarray}
and the deluxe scaling matrices (\cite{DP2012})
\begin{eqnarray}\label{DiFD-deluxe}
\vec{D}^{(r)}_{F_k}=(\vec{S}^{(r)}_{F_k} +
\vec{S}^{(j)}_{F_k})^{-1}\vec{S}^{(r)}_{F_k},~
\vec{D}^{(j)}_{F_k}=(\vec{S}^{(r)}_{F_k} +
\vec{S}^{(j)}_{F_k})^{-1}\vec{S}^{(j)}_{F_k},
\end{eqnarray}
where $\vec{I}$ denotes the $n_k \times n_k$ identity matrix, and
\begin{eqnarray}\label{Sij-def}
\vec{S}^{(\nu)}_{F_k}  = (a_{l,m}^{(\nu)})_{n_k\times n_k},~a_{l,m}^{(\nu)} = a_{\nu}(\phi^{k,\nu}_m, \phi^{k,\nu}_l),~l,m=1,\cdots, n_k,~\nu=r,j.
\end{eqnarray}

\begin{remark}
Since the spectral condition number of the plane wave discretizations of the Helmholtz equations with high wave numbers
grows like $h^{-q}$, where $q$ is proportional to the number of plane wave bases in each element $p$ (see \cite{HMP2016}),
some local techniques are introduced, for example,
by using the incomplete LU factorization preconditioner to get the deluxe scaling matrices.
\end{remark}

Using the above-mentioned scaling operators or matrices, a new set of basis functions of $W_{k}^{(\nu)} (\nu = r,j)$ can be defined as
 \begin{eqnarray}\label{ZJ-Dphik-ij-Def}
\Phi^{k,r}_D=D_{F_{k}}^{(j)} \Phi^{k,r}=(\vec{D}_{F_{k}}^{(j)})^T
\Phi^{k,r}, ~\Phi^{k,j}_D=D_{F_{k}}^{(r)}
\Phi^{k,j}=(\vec{D}_{F_{k}}^{(r)})^T \Phi^{k,j}.
\end{eqnarray}

In order to defined the so-called dual-primal basis functions,
we need to use the function spaces $W_k^{(\nu)}$ and $\bar{W}_k^{(\nu)}$ $(\nu = r,j)$ defined in \eqref{def-W-k-nu},
and the scaling operators defined in \eqref{def-D-operator-matrix}, to introduce a class of transformation operators (or matrices).

Set $n_k = n_{\Delta}^k + n_{\Pi}^k$, where the integer $n_{\Delta}^k, n_{\Pi}^k \ge 0$. Let
$n_k$-order complex nonsingular matrix
\begin{eqnarray}\label{ZJ-Oper-TFk-DelPi-Def}
\vec{T}_{F_k}=(\vec{T}_{\Delta}^{F_k},~ \vec{T}_{\Pi}^{F_k}),
\end{eqnarray}
where $\vec{T}_{\Delta}^{F_k}$ and $\vec{T}_{\Pi}^{F_k}$ are separately $n_k\times n_{\Delta}^k$ and $n_k\times n_{\Pi}^k$ matrices.

For any $\nu=r,j$, using the matrix $\vec{T}_{F_k}$, introduce the linear operators $T_{F_k}$.
These operators transform the basis vectors $\bar{\Phi}^{k,\nu}$ and $\Phi^{k,\nu}_D$ into
\begin{eqnarray}\label{ZJ-TBarphik-ij-Def}
\bar{\Phi}^{k, \nu}_T &= T_{F_k}\bar{\Phi}^{k,\nu}:=\left(
\begin{array}{l}
\bar{\Phi}^{k, \nu}_{\Delta}\\
\bar{\Phi}^{k,\nu}_{\Pi}
\end{array}\right),~
\Phi^{k, \nu}_{T_D}=T_{F_k} \Phi^{k,\nu}_D:=\left(
\begin{array}{l}
\Phi^{k, \nu}_{D,\Delta}\\
\Phi^{k,\nu}_{D,\Pi}
\end{array}\right),
\end{eqnarray}
where
\begin{eqnarray}\label{ZJ-TDphik-ij-Def}
 \bar{\Phi}^{k,\nu}_{\chi} = (\bar{\phi}^{k,\nu}_{\chi,1},\cdots,\bar{\phi}^{k,\nu}_{\chi,n_{\chi}^k})^T  = T_{\chi}^{F_k} \bar{\Phi}^{k,\nu}=
 (\vec{T}_{\chi}^{F_k})^T  \bar{\Phi}^{k,\nu},
 \Phi^{k,\nu}_{D,\chi}=T_{\chi}^{F_k} \Phi^{k,\nu}_D= (\vec{T}_{\chi}^{F_k})^T  \Phi^{k,\nu}_D, \chi=\Delta,
\Pi
\end{eqnarray}
are the so-called dual-primal basis functions.

For any given real number $\Theta \ge 1$, the above operator $T_{F_k}$ must satisfies
 \begin{eqnarray}\label{ZJ-Tphik-TDphik-Prop}
|w_{D,\Delta}^{k,r}|^2_{a_r} +
|\tilde{w}_{D,\Delta}^{k,j}|^2_{a_j} \le \Theta
|\bar{w}_{k,\Delta}^{(r)} +\bar{w}_{k,\Pi}^{(r)}|^2_{a_r},
\end{eqnarray}
where
\begin{eqnarray}\label{ZJ-Tphik-TDphik-w1}
w_{D,\Delta}^{k,r} =(\vec{w}_\Delta)^T
\Phi^{k,r}_{D,\Delta},~\tilde{w}_{D,\Delta}^{k,j} =(\vec{w}_\Delta)^T
\Phi^{k,j}_{D,\Delta},~\bar{w}_{k,\chi}^{(r)}=
(\vec{w}_\chi)^T  \bar{\Phi}^{k,r}_{\chi},~\chi=\Delta, \Pi,
\end{eqnarray}
here $\vec{w}_{\Delta} \in \mathbb{C}^{n_{\Delta}^k}$, $\vec{w}_{\Pi} \in \mathbb{C}^{n_{\Pi}^k}$ are any given vectors.

The inequality \eqref{ZJ-Tphik-TDphik-Prop} plays a crucial role in the estimation of the condition number of the adaptive BDDC algorithm, and it is always
be replaced by lazy eigenanalysis.  
Following \cite{KCX2017,KCW2017,PSW2017}, we introduce the matrices
\begin{align*}
\vec{\bar{S}}^{(\nu)}_{F_k}  = (b_{l,m}^{(\nu)})_{n_k\times n_k},~b_{l,m}^{(\nu)} = a_{\nu}(\bar{\phi}^{k,\nu}_m, \bar{\phi}^{k,\nu}_l),~l,m=1,\cdots, n_k,~\nu=r,j,
\end{align*}
where the sesquilinear form $a_{\nu}(\cdot,\cdot)$ and the basis functions $\{\bar{\phi}^{k,\nu}_l\}_{l=1}^{n_k}$ are seperately defined in \eqref{def-AUV} and \eqref{def-Phi-Psi-k-nu}.

Then considering a generalized eigenvalue problem (see \cite{PD2013,KCW2017,PC2016,KCX2017})
\begin{eqnarray}\label{eig-pro-intro}
\vec{A}_{F_k}^D \vec{v}
 = \lambda \vec{B}_{F_k} \vec{v},
\end{eqnarray}
where
\begin{eqnarray}\label{Def-A-B}
\vec{A}_{F_k}^D = (\vec{D}_{F_k}^{(r)})^H \vec{S}_{F_k}^{(j)} \vec{D}_{F_k}^{(r)} + (\vec{D}_{F_k}^{(j)})^H \vec{S}_{F_k}^{(r)} \vec{D}_{F_k}^{(j)},
~~\vec{B}_{F_k} = \vec{\bar{S}}_{F_{k}}^{(r)}:\vec{\bar{S}}_{F_{k}}^{(j)},
\end{eqnarray}
here $\diamond^H$ denotes the conjugate transpose of $\diamond$,  $\vec{v} \in \mathbb{C}^{n_k}$, $\lambda \in \mathbb{C}$,
$\vec{D}_{F_k}^{(\nu)}(\nu=r,j)$, $\vec{S}^{(\nu)}_{F_k}(\nu=r,j)$ are separately defined in \eqref{def-D-operator-matrix} and \eqref{Sij-def},
and the parallel sum
$$\vec{B}_{F_k} =
\vec{\bar{S}}_{F_{k}}^{(j)}(\vec{\bar{S}}_{F_{k}}^{(r)}+\vec{\bar{S}}_{F_{k}}^{(j)})^{\dagger}\vec{\bar{S}}_{F_{k}}^{(r)},$$
here
$(\vec{\bar{S}}_{F_{k}}^{(r)}+\vec{\bar{S}}_{F_{k}}^{(j)})^{\dagger}$
is a pseudo inverse of the matrix
$\vec{\bar{S}}_{F_{k}}^{(r)}+\vec{\bar{S}}_{F_{k}}^{(j)}$.

Since $\vec{\bar{S}}_{F_{k}}^{(\nu)}(\nu=r,j)$ are both Hermitian positive semi-definite,  $\vec{\bar{S}}_{F_{k}}^{(r)}:\vec{\bar{S}}_{F_{k}}^{(j)}$
is also Hermitian positive semi-definite and satisfies the following spectral inequalities \cite{1969AD}
\begin{eqnarray}\label{parallel-property}
\vec{B}_{F_k} \le \vec{\bar{S}}_{F_{k}}^{(\nu)},~\nu=r,j.
\end{eqnarray}

Let
\begin{eqnarray}\label{lambda-Theta}
|\lambda_1| \le |\lambda_2| \le \cdots \le |\lambda_{n_{\Delta}^k}| \le \Theta \le |\lambda_{n_{\Delta}^k+1}| \le \cdots \le |\lambda_{n_k}|,
\end{eqnarray}
where $\lambda_k (k=1,\cdots,n_k)$ is the eigenvalue of \eqref{eig-pro-intro}, $n_{\Delta}^k$ is a non-negative integer, and $\Theta \ge 1$ is given in \eqref{ZJ-Tphik-TDphik-Prop}.

Denote $\vec{T}_{\Delta}^{F_k}$ and $\vec{T}_{\Pi}^{F_k}$ in the $n_k \times n_k$ transform matrix $\vec{T}_{F_k}$ defined in \eqref{ZJ-Oper-TFk-DelPi-Def} as
\begin{align*}
\vec{T}_{\Delta}^{F_k}:= (\vec{v}_1,\cdots,\vec{v}_{n_{\Delta}^k}),~~\vec{T}_{\Pi}^{F_k}:= (\vec{v}_{n_{\Delta}^k+1},\cdots,\vec{v}_{n_k}),
\end{align*}
here $\vec{v}_l(l=1,\cdots,n_k)$ are the generalized eigenvectors  of \eqref{eig-pro-intro} corresponding to $\lambda_l$ and
satisfy
\begin{eqnarray}\label{oth-1}
\vec{v}_l^H \vec{A}_{F_k}^D \vec{v}_m = \vec{v}_l^H \vec{B}_{F_k} \vec{v}_m = 0,~\mbox{if}~l \neq m.
\end{eqnarray}



From this, we have
\begin{eqnarray}\label{OrthProp-Tk-Delta}
(\vec{T}_{\Delta}^{F_k})^H \vec{C} \vec{T}_{\Delta}^{F_k}= diag
(\vec{v}_1^H \vec{C} \vec{v}_1,\cdots, \vec{v}_{n_{\Delta}^k}^H \vec{C}
\vec{v}_{n_{\Delta}^k}),~~\vec{C} = \vec{A}_{F_k}^D, \vec{B}_{F_k},
\end{eqnarray}
and
\begin{eqnarray}\label{OrthProp-Tk-Delta-Pi}
(\vec{T}_{\Pi}^{F_k})^H \vec{B}_{F_k}
\vec{T}_{\Delta}^{F_k}=0,~ (\vec{T}_{\Delta}^{F_k})^H \vec{B}_{F_k}
\vec{T}_{\Pi}^{F_k} = 0.
\end{eqnarray}
Using \eqref{eig-pro-intro}, we can prove that
\begin{eqnarray}\label{remark-equ-general-eig-pro}
\vec{v}_l^H \vec{A}_{F_k}^D \vec{v}_l = |\lambda_l| \vec{v}_l^H \vec{B}_{F_k} \vec{v}_l,~l=1,\cdots,n_{\Delta}^k.
\end{eqnarray}

%
%
%
%

 Using the above matrix $\vec{T}_{F_k}$, we can obtain the operator
$T_{F_k}$ defined in \eqref{ZJ-TBarphik-ij-Def}. Next, we verify that it satisfies \eqref{ZJ-Tphik-TDphik-Prop}.

By \eqref{ZJ-TDphik-ij-Def} and \eqref{ZJ-Dphik-ij-Def}, we can rewrite the functions in \eqref{ZJ-Tphik-TDphik-w1} as
\begin{eqnarray}\label{Esp-wk-Del-Td-nu}
w_{D,\Delta}^{k,r} =(\vec{w}_\Delta)^T
(\vec{T}_{\Delta}^{F_k})^T  (\vec{D}_{F_{k}}^{(j)})^T \Phi^{k,r},~
\tilde{w}_{D,\Delta}^{k,j} =(\vec{w}_\Delta)^T
(\vec{T}_{\Delta}^{F_k})^T  (\vec{D}_{F_{k}}^{(r)})^T \Phi^{k,j},
\end{eqnarray}
and
\begin{eqnarray}\label{Esp-Barwk-chi-Ti}
\bar{w}_{\chi}^{k,r}=
(\vec{w}_\chi)^T (\vec{T}_{\chi}^{F_k})^T  \bar{\Phi}^{k,\nu},~\chi=\Delta, \Pi.
\end{eqnarray}

From \eqref{def-normAi}, \eqref{Esp-wk-Del-Td-nu}, the property of the sesquilinear form $a_r(\cdot,\cdot)$ and \eqref{Sij-def}, it is easy to verify that
\begin{eqnarray}\label{Ai-Dkj-wkDeltai}
|w_{D,\Delta}^{k,r}|^2_{a_r} = a_r(w_{D,\Delta}^{k,r}, w_{D,\Delta}^{k,r})
= \vec{w}_{\Delta}^H(\vec{T}_{\Delta}^{F_k})^H (\vec{D}_{F_k}^{(j)})^H\vec{S}_{F_k}^{(r)}
 \vec{D}_{F_k}^{(j)} \vec{T}_{\Delta}^{F_k} \vec{w}_{\Delta}.
\end{eqnarray}
Similarly, we have
 \begin{eqnarray}\label{Aj-Dki-wkDeltaj}
|\tilde{w}_{D,\Delta}^{k,j}|^2_{a_j} = \vec{w}_{\Delta}^H (\vec{T}_{\Delta}^{F_k})^H (\vec{D}_{F_k}^{(r)})^H \vec{S}_{F_k}^{(j)}
 \vec{D}_{F_k}^{(r)} \vec{T}_{\Delta}^{F_k} \vec{w}_{\Delta}.
\end{eqnarray}

By using \eqref{Ai-Dkj-wkDeltai}, \eqref{Aj-Dki-wkDeltaj}, \eqref{Def-A-B} and \eqref{OrthProp-Tk-Delta},
we can obtain
\begin{eqnarray*} 
|w_{D,\Delta}^{k,r}|^2_{a_r} +
|\tilde{w}_{D,\Delta}^{k,j}|^2_{a_j} = \vec{w}_{\Delta}^H
 (\vec{T}_{\Delta}^{F_k})^H \vec{A}_{F_k}^D  \vec{T}_{\Delta}^{F_k}
 \vec{w}_{\Delta}
= \vec{w}_{\Delta}^H diag\{\vec{v}_1^H \vec{A}_{F_k}^D \vec{v}_1,\cdots, \vec{v}_{n_{\Delta}^k}^H \vec{A}_{F_k}^D \vec{v}_{n_{\Delta}^k}\}\vec{w}_{\Delta}.
\end{eqnarray*}

From this, and using \eqref{remark-equ-general-eig-pro}, \eqref{lambda-Theta}, \eqref{OrthProp-Tk-Delta}, \eqref{OrthProp-Tk-Delta-Pi}, \eqref{parallel-property}, \eqref{Esp-Barwk-chi-Ti}, and note that $\vec{B}_{F_k}$ is Hermitian positive semi-definite, we known that
\begin{align*}
|w_{D,\Delta}^{k,r}|^2_{a_r} +
|\tilde{w}_{D,\Delta}^{k,j}|^2_{a_j}
&= \vec{w}_{\Delta}^H diag\{|\lambda_1| \vec{v}_1^H \vec{B}_{F_k} \vec{v}_1,\cdots, |\lambda_{n_{\Delta}^k}| \vec{v}_{n_{\Delta}^k}^H \vec{B}_{F_k} \vec{v}_{n_{\Delta}^k}\}\vec{w}_{\Delta}
\\\nonumber
& \le \Theta \vec{w}_{\Delta}^H diag\{\vec{v}_1^H \vec{B}_{F_k} \vec{v}_1,\cdots, \vec{v}_{n_{\Delta}^k}^H \vec{B}_{F_k} \vec{v}_{n_{\Delta}^k}\}\vec{w}_{\Delta}\\
&= \Theta \vec{w}_{\Delta}^H (\vec{T}_{\Delta}^{F_k})^H \vec{B}_{F_k} \vec{T}_{\Delta}^{F_k} \vec{w}_{\Delta}
\\
&\le \Theta (\vec{T}_{\Delta}^{F_{k}}\vec{w}_{\Delta} + \vec{T}_{\Pi}^{F_{k}}\vec{w}_{\Pi})^H \vec{B}_{F_k} (\vec{T}_{\Delta}^{F_{k}}\vec{w}_{\Delta} + \vec{T}_{\Pi}^{F_{k}}\vec{w}_{\Pi})
\\\nonumber
&\le \Theta (\vec{T}_{\Delta}^{F_{k}}\vec{w}_{\Delta} + \vec{T}_{\Pi}^{F_{k}}\vec{w}_{\Pi})^H \vec{\bar{S}}_{F_{k}}^{(r)} (\vec{T}_{\Delta}^{F_{k}}\vec{w}_{\Delta} + \vec{T}_{\Pi}^{F_{k}}\vec{w}_{\Pi})
\\\nonumber
&= \Theta
a_r(\bar{w}_{k,\Delta}^{(r)} + \bar{w}_{k,\Pi}^{(r)}, \bar{w}_{k,\Delta}^{(r)} + \bar{w}_{k,\Pi}^{(r)})
\\
&= \Theta |\bar{w}_{k,\Delta}^{(r)} + \bar{w}_{k,\Pi}^{(r)}|_{a_r}^2
\end{align*}
Then \eqref{ZJ-Tphik-TDphik-Prop} holds. $\Box$

Further, utilizing the linear operator $T_{F_k}$ (or matrix $\vec{T}_{F_k}$) defined above, we can transform the basis function vector $\Phi^k$ of $W_k$ into the so-called new dual-primal basis function vector
\begin{eqnarray*}
 \Phi_T^{k} = T_{F_k} \Phi^{k} :=
\left(\begin{array}{l}
\Phi^{k}_{\Delta}\\
\Phi^{k}_{\Pi}
\end{array}\right),
\end{eqnarray*}
where
\begin{eqnarray}\label{ZJ-Def-Bases-WkDel-WkPi}
 \Phi^{k}_{\chi} = (\phi^{k}_{\chi,1},\cdots,\phi^{k}_{\chi,n_{\chi}^k})^T = T_{\chi}^{F_k} \Phi^{k}:=
 (\vec{T}_{\chi}^{F_k})^T \Phi^{k},~\chi=\Delta,
\Pi.
\end{eqnarray}

From this, we can decompose $W_{k}$ into
\begin{eqnarray}\label{ZJ-Def-Wk}
W_{k} = W_{k,\Delta}\oplus  W_{k,\Pi},
\end{eqnarray}
where the function spaces $W_{k,\Delta}$ and $W_{k,\Pi}$ ($k=1,\cdots, N_f$) are separately formed by the component functions of $\Phi^{k}_{\Delta}$ and $\Phi^{k}_{\Pi}$.

Then, using \eqref{ZJ-Def-Wk}, a decomposition of the space $\hat{W}$ can be obtained as follows
\begin{eqnarray}\label{Zj-Decomp-HatW}
\hat{W} = W_{\Delta} \oplus W_{\Pi},
\end{eqnarray}
where the dual and primal variable space 
\begin{eqnarray}\label{Zj-Def-WDelPi}
W_{\Delta}=\oplus_{k=1}^{N_f} W_{k,\Delta} ,~W_{\Pi} = (\oplus_{k=1}^{N_f} W_{k,\Pi}) \oplus (\oplus_{k=1}^{N_v} W_{k,c}).
\end{eqnarray}
%


Similarly, using the linear operator $T_{F_k}$ (or matrix $\vec{T}_{F_k}$), we can transform the basis function vector $\Phi^{k,\nu}$ of
$W_{k}^{(\nu)}$ into the dual-primal basis function vector
\begin{eqnarray*}
 \Phi_T^{k,\nu} = T_{F_k} \Phi^{k,\nu} :=
\left(\begin{array}{l}
\Phi^{k,\nu}_{\Delta}\\
\Phi^{k,\nu}_{\Pi}
\end{array}\right),
\end{eqnarray*}
where
\begin{eqnarray}\label{Zj-Def-Vec-TildPhiDel-1}
 \Phi^{k,\nu}_{\chi} = (\phi^{k,\nu}_{\chi,1},\cdots,\phi^{k,\nu}_{\chi,n_{\chi}^k})^T = T_{\chi}^{F_k} \Phi^{k,\nu}:=
 (\vec{T}_{\chi}^{F_k})^T \Phi^{k,\nu},~\chi=\Delta,
\Pi.
\end{eqnarray}

By using the definitions \eqref{ZJ-Def-Bases-WkDel-WkPi},
\eqref{Zj-Def-Vec-TildPhiDel-1} of $\{\phi^{k}_{\chi,l}\}$ and $\{\phi^{k,\nu}_{\chi,l}\}$ $(\chi = \Delta,\Pi, \nu=i,j)$, and \eqref{rel-phik-phikv}, we get
\begin{align*}
\phi^{k}_{\chi,l}|_{\bar{D}_{\nu}} = \phi^{k,\nu}_{\chi,l}|_{\bar{D}_{\nu}},~l=1,\cdots,n_{\chi}^k, \chi = \Delta,\Pi.
\end{align*}

Decompose $W_{k}^{(\nu)}$ into
\begin{eqnarray}\label{Zj-Def-WkDelnu-WkPinu}
W_{k}^{(\nu)} = W_{k,\Delta}^{(\nu)} \oplus
W_{k,\Pi}^{(\nu)},~ \nu=i,j,
\end{eqnarray}
where the basis function vector of $W_{k,\Delta}^{(\nu)}$ and $W_{k,\Pi}^{(\nu)}$ are $\Phi^{k,\nu}_{\Delta}$ and $\Phi^{k,\nu}_{\Pi}$, respectively.

Using the above decomposition, the partially coupled function space which is relied on the adaptive BDDC preconditioner can be obtained and expressed as
\begin{eqnarray}\label{ZJ-TildeW-Decomp}
\tilde{W} = \tilde{W}_{\Delta} \oplus W_{\Pi},
\end{eqnarray}
where
$W_{\Pi}$ is defined in \eqref{Zj-Def-WDelPi}, and
\begin{eqnarray}\label{ZJ-TildeW-Delta-Decomp}
\tilde{W}_{\Delta} = \oplus_{r=1}^{N_d} W_{\Delta}^{(r)},
~W_{\Delta}^{(r)} =\oplus_{k\in \mathcal{M}_r}
W_{k,\Delta}^{(r)} = \oplus_{l=1}^{f_r}
W_{r_l,\Delta}^{(r)},~r=1,\cdots, N_d,
\end{eqnarray}
here $\mathcal{M}_r$ is defined in \eqref{def-M-i}, $f_r$ denotes the size of $\mathcal{M}_r$, and the subspace $W_{r_l, \Delta}^{(r)}$ is defined in \eqref{Zj-Def-WkDelnu-WkPinu}.



Further, for the need of the theoretical analysis, we can present the decomposition of the auxiliary space $\bar{W}_{k}^{(\nu)}$ based on the dual-primal basis function vector as
\begin{eqnarray*}
\bar{W}_{k}^{(\nu)} = \bar{W}_{k,\Delta}^{(\nu)} \oplus
\bar{W}_{k,\Pi}^{(\nu)},
\end{eqnarray*}
where the basis function vectors $\bar{\Phi}_{\Delta}^{k,\nu}$ and $\bar{\Phi}_{\Pi}^{k,\nu}$ of the subspaces $\bar{W}_{k,\Delta}^{(\nu)}$ and
$\bar{W}_{k,\Pi}^{(\nu)}$ are defined in \eqref{ZJ-TDphik-ij-Def}, respectively.

In the following, the dual-primal basis function vectors $\{\Phi_{\Delta}^k\}$ and $\{\Phi_{\Pi}^k\}$ for the Schur complement space $\hat{W}$ will be adopted.
And we will design and analyze the adaptive BDDC preconditioner for the corresponding Schur complement system  \eqref{def-schur-system}.
%
%
%
\subsection{BDDC preconditioner}\label{sec:4}

We focus on the two-level adaptive BDDC preconditioner firstly.
In order to describe this preconditioner, we need to introduce some commonly used linear operators firstly.

Let $R_U^V$ be the restriction operator from the Hilbert space $U$ onto its subspaces $V$,
and $I_V^U$ be the interpolation operator from $V$ to $U$ (see reference \cite{PSW2017}).
In particular, when $V=U$, $I_{U}^{U}$ (or $I_{V}^{V}$) is an identity operator.

Denote the Hilbert spaces $Z = span\{\phi_1^Z,\cdots,\phi_n^Z\}$ and $W = span\{\phi_1^W,\cdots,\phi_n^W\}$, define the linear basis transformation operator $T_Z^W: Z\rightarrow W$ such that
\begin{eqnarray}\label{ZJ-Def-Tran-ZtoW}
 T_Z^W \phi^{Z}_l =\phi^{W}_l,~l=1,\cdots, n.
\end{eqnarray}
In particular, for any $k=1, \cdots, N_f$ and $\nu=r, j$, we have
\begin{eqnarray}\label{ZJ-Basis-WkiToWk-Tran}
&&T_{W_k}^{W_k^{(\nu)}} \phi^{k}_{\chi, l}= \phi^{k,
\nu}_{\chi,l},~
T^{W_k}_{W_k^{(\nu)}} \phi^{k, \nu}_{\chi, l} =
\phi^{k}_{\chi, l},~
l=1,\cdots, n_{\chi}^k,~\chi = \Delta,\Pi,\\\label{ZJ-Basis-WkiToWk-Tran-ij}
&& T_{W_k^{(r)}}^{W_k^{(j)}} \phi^{k,r}_{\chi,l}= \phi^{k,j}_{\chi,l},~T_{W_k^{(\nu)}}^{\bar{W}_k^{(\nu)}} \phi^{k,\nu}_{\chi,l} = \bar{\phi}^{k,\nu}_{\chi,l},~
l=1,\cdots, n_{\chi}^k,~\chi = \Delta,\Pi,
\end{eqnarray}
and for any $k =1,\cdots,N_v$ and $r \in \mathcal{N}_k$, we have
\begin{eqnarray}\label{T-Wkc-Wkcr}
T_{W_{k,c}}^{W_{k,c}^{(r)}} \psi^{k}_{l}= \psi^{k,r}_{l},~l=1,\cdots,n_k^c.
\end{eqnarray}

For a given linear operator $L$ from the Hilbert space $U$ to the Hilbert space $V$, the complex conjugate transpose
operator $L^H: V \rightarrow U$ is defined by
\begin{align*}
(L^H v, u) = (v, Lu),~~\forall u \in U, v\in V.
\end{align*}

Then, using the basis transformation operators $T_{W_k}^{W_k^{(r)}}$ and $T_{W_{k,c}}^{W_{k,c}^{(r)}}$, another sesquilinear form on $\tilde{W}$ can be introduced.

For any $\tilde{u},\tilde{v} \in \tilde{W}$, using the decomposition \eqref{ZJ-TildeW-Decomp}, \eqref{ZJ-TildeW-Delta-Decomp} and \eqref{Zj-Def-WDelPi} of $\tilde{W}$, we have
\begin{eqnarray}\label{ZJ-TildeW-FuncDecomp}
\tilde{\zeta} = \sum\limits_{r=1}^{N_d} \sum\limits_{k\in
\mathcal{M}_r} \tilde{\zeta}_{k,\Delta}^{(r)} + \sum\limits_{k=1}^{N_f}
\tilde{\zeta}_{k,\Pi} + \sum\limits_{k=1}^{N_v}
\tilde{\zeta}_{k,c},~~\tilde{\zeta} = \tilde{u}, \tilde{v},
\end{eqnarray}
where $\tilde{\zeta}_{k,\Delta}^{(r)}\in W_{k,\Delta}^{(r)}$,
$\tilde{\zeta}_{k,\Pi}\in W_{k,\Pi}$ and $\tilde{\zeta}_{k,c} \in W_{k,c}$.
From this, we can define a sesquilinear form
$\tilde{a}(\cdot,\cdot)$ and its corresponding HPD operator $\tilde{S}: \tilde{W} \rightarrow \tilde{W}$ such that
\begin{eqnarray}\label{def-operator-tilde-A}
(\tilde{S} \tilde{u}, \tilde{v}) :=\tilde{a}(\tilde{u},
\tilde{v}) = \sum\limits_{r=1}^{N_d} a_r(\tilde{u}^{(r)},
\tilde{v}^{(r)}),~~\forall \tilde{u}, \tilde{v} \in
\tilde{W},
\end{eqnarray}
where
\begin{eqnarray}\label{def-operator-tilde-S}
\tilde{\zeta}^{(r)} = \sum\limits_{k \in \mathcal{M}_r}
(\tilde{\zeta}_{k,\Delta}^{(r)} + T_{W_k}^{W_k^{(r)}}
\tilde{\zeta}_{k,\Pi}) + \sum\limits_{k\in \mathcal{M}_r^c}T_{W_{k,c}}^{W_{k,c}^{(r)}}
\tilde{\zeta}_{k,c},~\tilde{\zeta} = \tilde{u}, \tilde{v}.
\end{eqnarray}


For any $r=1,\cdots, N_d$, we define a linear operator $\hat{I}_{W_{\Delta}^{(r)}}^{D}: W_{\Delta}^{(r)} \rightarrow
\hat{W}$ such that
\begin{eqnarray}\label{ZJ-Def-Oper-HatI-DelDi}
\hat{I}_{W_{\Delta}^{(r)}}^{D}
=\sum\limits_{k \in \mathcal{M}_r} T^{W_k}_{W_k^{(r)}}
D_{F_k}^{(r)} R_{W_{\Delta}^{(r)}}^{W_{k,\Delta}^{(r)}},
\end{eqnarray}
where $R_{W_{\Delta}^{(r)}}^{W_{k,\Delta}^{(r)}}$ is a restriction operator from $W_{\Delta}^{(r)}$ to its subspace $W_{k,\Delta}^{(r)}$.
It's easy to verify that for any given $k \in \mathcal{M}_r$, we can obtain
\begin{eqnarray}\label{def-R-property-1}
\hat{I}_{W_{\Delta}^{(r)}}^{D} w =  T^{W_k}_{W_k^{(r)}}
D_{F_k}^{(r)} w,~\forall w  \in W_{k,\Delta}^{(r)}.
\end{eqnarray}

Using $\hat{I}_{W_{\Delta}^{(r)}}^{D} (r=1,\cdots,N_d)$ and the restriction operators $R_{\tilde{W}}^{W_{\Delta}^{(r)}} (r=1,\cdots,N_d)$ and $R_{\tilde{W}}^{W_{\Pi}}$, we can introduce an average operator $E_D: \tilde{W} \rightarrow \hat{W}$, which satisfy
\begin{eqnarray}\label{ZJ-Def-ED}
E_D = Q_D+ R_{\tilde{W}}^{W_{\Pi}},
\end{eqnarray}
where
\begin{eqnarray}\label{ZJ-Def-QD}
Q_D=\sum\limits_{r=1}^{N_d} \hat{I}_{W_{\Delta}^{(r)}}^{D}
R_{\tilde{W}}^{W_{\Delta}^{(r)}}.
\end{eqnarray}

With the above-mentioned preparations, by using the sesquilinear form $\tilde{a}(\cdot,\cdot)$ and the average operator $E_D$,
the adaptive BDDC operator $M_{BDDC}^{-1}: \hat{W} \rightarrow \hat{W}$ for solving the Schur
complement system \eqref{def-schur-system} can then be given in the following algorithm.
\begin{algorithm}\label{chap2-ZJ-algorithm-variation-BDDC-domain-simple}~

For any given ${g}\in \hat{W}$, $u_g = M_{BDDC}^{-1} {g} \in \hat{W}$ can be obtained by the following two steps.
\begin{description}
\item[Step 1.] Compute $w \in \tilde{W}$ by
\begin{align*}
\tilde{a}(w, v) = ((E_D)^H g, v),~~\forall v \in \tilde{W}.
\end{align*}

\item[Step 2.] Let
\begin{align*}
u_g = E_D w.
\end{align*}

\end{description}
\end{algorithm}

From this algorithm, using the definition \eqref{def-operator-tilde-A} of $\tilde{S}$ and note that $\tilde{S}$ is Hermitian positive definite, it is easy to verify that $M_{BDDC}^{-1}$ can be written as
\begin{eqnarray}\label{ZJ-Expression-MBDDC-Inv}
M_{BDDC}^{-1} = E_D \tilde{S}^{-1} (E_D)^H.
\end{eqnarray}

In order to facilitate parallel programming, we can give an equivalent description of Algorithm \ref{chap2-ZJ-algorithm-variation-BDDC-domain-simple}.
For this purpose, some operators are introduced firstly.

Using the prolongation operators $I_{W_{\Delta}^{(r)}}^{\tilde{W}}$ ($r=1,\cdots,N_d$), a linear operator from $\tilde{W}$ to $\tilde{W}_{\Delta}$ can be defined as
\begin{eqnarray}\label{ZJn-Def-TildeSDel-Inv}
\tilde{S}_{\Delta}^{-1}= \sum\limits_{r=1}^{N_d}
(\tilde{S}^{(r)}_{\Delta\Delta})^{-1}(I_{W_{\Delta}^{(r)}}^{\tilde{W}})^H = \sum\limits_{r=1}^{N_d}
I_{W_{\Delta}^{(r)}}^{\tilde{W}}
(\tilde{S}^{(r)}_{\Delta\Delta})^{-1}(I_{W_{\Delta}^{(r)}}^{\tilde{W}})^H,
\end{eqnarray}
where
\begin{eqnarray}\label{ZJ-Oper-TilSi-Del-Pi}
\tilde{S}^{(r)}_{\Delta\Delta} =
(I_{W_{\Delta}^{(r)}}^{\tilde{W}})^H \tilde{S}
I_{W_{\Delta}^{(r)}}^{\tilde{W}}.
\end{eqnarray}

Further, utilizing $\tilde{S}_{\Delta}^{-1}$, we can introduce linear operator $O_{\tilde{\Pi}}: W_{\Pi} \rightarrow \tilde{W}$ as
\begin{eqnarray}\label{ZJ-Repres-OPi-Eqiv}
O_{\tilde{\Pi}}= I_{W_{\Pi}}^{\tilde{W}} - \tilde{S}_{\Delta}^{-1} \tilde{S}
I_{W_{\Pi}}^{\tilde{W}} = (I_{\tilde{W}}^{\tilde{W}} - \tilde{S}_{\Delta}^{-1} \tilde{S}) I_{W_{\Pi}}^{\tilde{W}},
\end{eqnarray}
where $I_{W_{\Pi}}^{\tilde{W}}$ is the prolongation operator from $W_{\Pi}$ to $\tilde{W}$, $I_{\tilde{W}}^{\tilde{W}}$ is an identity operator on $\tilde{W}$.

Thus, by using the expression \eqref{ZJ-Expression-MBDDC-Inv} of the adaptive BDDC operator $M_{BDDC}^{-1}$,
refer to the derivation process of Theorem 4.1 in \cite{PSW2017}, we can see that Algorithm \ref{chap2-ZJ-algorithm-variation-BDDC-domain-simple} can be described as equivalent as follows.
\begin{algorithm}\label{ZJ-algorithm-variation-BDDC-domain}~

For any given ${g}\in \hat{W}$, $u_g = M_{BDDC}^{-1} {g}
\in \hat{W}$ can be obtained from the four steps.
\begin{description}
\item[Step 1.] Compute
$u^{\Delta,r}_a \in W_{\Delta}^{(r)}(r=1,\cdots,N_d)$ in parallel by
\begin{align*}
a_r(u^{\Delta,r}_a, v) = ((\hat{I}_{W_{\Delta}^{(r)}}^{D})^H {g}, v),~~\forall v \in W_{\Delta}^{(r)},
\end{align*}
and set
\begin{align*}
u_{\Delta,a} = \sum\limits_{r=1}^{N_d} \hat{I}_{W_{\Delta}^{(r)}}^{D} u^{\Delta,r}_a \in \hat{W},
\end{align*}
where the operator $\hat{I}_{W_{\Delta}^{(r)}}^{D}$ is defined in \eqref{ZJ-Def-Oper-HatI-DelDi}.


\item[Step 2.]  Compute $u_{\Pi} \in W_{\Pi}$ by
\begin{align*}
\tilde{a}(O_{\tilde{\Pi}} u_{\Pi}, O_{\tilde{\Pi}}v) = (g, v) - \tilde{a}(\sum\limits_{r=1}^{N_d} u^{\Delta,r}_a,v),~~\forall v\in
W_{\Pi},
\end{align*}
where the operator $O_{\tilde{\Pi}}$ is defined in \eqref{ZJ-Repres-OPi-Eqiv}.

\item[Step 3.] Compute $u^{\Delta,r}_{b} \in W_{\Delta}^{(r)}(r=1,\cdots,N_d)$ in parallel by
\begin{align*}
a_r(u^{\Delta,r}_{b}, v) = - a_r(u_{\Pi}, v),~~\forall v \in W_{\Delta}^{(r)},
\end{align*}
and set
\begin{align*}
u_{\Delta,b} = \sum\limits_{r=1}^{N_d} \hat{I}_{W_{\Delta}^{(r)}}^{D} u^{\Delta,r}_b \in \hat{W}.
\end{align*}


\item[Step 4.] Set
\begin{align*}
u_g = u_{\Delta,a} + u_{\Pi} + u_{\Delta,b}.
\end{align*}

\end{description}

\end{algorithm}

Since Algorithm \ref{ZJ-algorithm-variation-BDDC-domain} can be considered as a two-level algorithm, we also call Algorithm \ref{ZJ-algorithm-variation-BDDC-domain} or Algorithm \ref{chap2-ZJ-algorithm-variation-BDDC-domain-simple} two-level adaptive BDDC algorithm.

Further, by using Algorithm \ref{chap2-ZJ-algorithm-variation-BDDC-domain-simple} or Algorithm \ref{ZJ-algorithm-variation-BDDC-domain},
we can get the following algorithm for solving the original variational problem \eqref{115-dis}.
\begin{algorithm}\label{chap2-ZJ-algorithm-variation-BDDC-domain-II}~

\begin{description}
\item[Step 1.]  Using the Krylov subspace iteration methods based on $M_{BDDC}^{-1}$ preconditioner to find $u_{\Gamma} \in \hat{W}$ such that
\begin{align*}
a(u_{\Gamma}, v) = \mathcal{L}(v),~\forall v \in \hat{W}.
\end{align*}

\item[Step 2.]  Compute $u^{(r)}_I \in V_I^{(r)}(r=1,\cdots,N_d)$ in parallel by
\begin{align*}
a_r(u^{(r)}_I, v) = \mathcal{L}(v) - a_r(u_{\Gamma}, v),~~\forall v \in V_I^{(r)}.
\end{align*}

\item[Step 3.] Set
\begin{align*}
u = \sum\limits_{r=1}^{N_d} u^{(r)}_I + u_{\Gamma}.
\end{align*}
\end{description}
\end{algorithm}

From the results of numerical experiments for the PWLS discretizations of the Helmholtz equations, we find that
the number of primal unknowns increases as the number of wave numbers and subdomains increase, and the corresponding coarse problem will become too large and hard to solve directly. This leads to multilevel extension of this algorithm naturally. Multilevel BDDC algorithm were first presented in \cite{Dohrmann2003}, and further developed in \cite{T20071,T20072,MSD2008,ZT2017}. In addition, the multilevel preconditioners for solving the systems arising from the plane wave discretizations for Helmholtz equations with large wave numbers were constructed by Hu and Li in \cite{HL20171,HL20172}.

Following \cite{T20071,T20072,MSD2008,ZT2017}, in our multilevel algorithm, we denote the $s$th level mesh by $\mathcal{T}_h^s$ $(s=0,\cdots,L-1)$,
where $L$ is the total number of levels, $\mathcal{T}_h^0 = \mathcal{T}_h$ is the finest level,
and a subdomain at a finer level is considered as an element of a coarser mesh.
Let $V_p^s({\mathcal T_h^s})$ be the finite element spaces of the original problem associated with $\mathcal{T}_h^s$,
and set $V_p^{s+1}({\mathcal T_h^{s+1}}) := W_{\Pi}^s$,
where $W_{\Pi}^s$ is the coarse space at level $s$.
In particular, $V_p^0({\mathcal T_h^0}) = V_p(\mathcal{T}_h)$ and $V_p^1({\mathcal T_h^1}) = W_{\Pi}$ are defined in \eqref{Vp-space} and \eqref{Zj-Def-WDelPi}, respectively. Noticing that Algorithm \ref{chap2-ZJ-algorithm-variation-BDDC-domain-II} gives an iteration process from level $s$ to level $s+1$ ($s=0$), and by using the solution of the Schur complete problem, the solution of the original problem at level $0$ can be obtained.
Iterating this procedure until $s < L-1$, and we can arrive at our multilevel adaptive BDDC algorithm.
A relation between the condition number of the multilevel BDDC algorithm with corner coarse function
in 2D and each level problem was presented in \cite{MSD2008}.

In the next section, we will provide the condition number estimate of the two-level adaptive BDDC preconditioned operator.

\section{Analysis of condition number bound}\label{sec:5}
\setcounter{equation}{0}

We first establish the relation between the operators $\tilde{S}$ and $\hat{S}$, which are defined in \eqref{def-operator-tilde-A} and \eqref{def-operator-hat-S}, respectively. For this purpose, we introduce a subspace of $\tilde{W}$ such that
 \begin{eqnarray*}
\bar{\tilde{W}} =\bar{\tilde{W}}_{\Delta} \oplus W_{\Pi},
\end{eqnarray*}
where $W_{\Pi}$ is defined in \eqref{Zj-Def-WDelPi},  and
\begin{align*}
\bar{\tilde{W}}_{\Delta} = \oplus_{k=1}^{N_f} \bar{\tilde{W}}_{k,\Delta},~~\bar{\tilde{W}}_{k,\Delta} = span\{\phi^{k,r}_{\Delta,1}
+\phi^{k,j}_{\Delta,1},\cdots,\phi^{k,r}_{\Delta,n_{\Delta}^k}
+\phi^{k,j}_{\Delta,n_{\Delta}^k}\},
\end{align*}
here $\{\phi^{k, \nu}_{\Delta, l}\}$ is defined in \eqref{Zj-Def-Vec-TildPhiDel-1}.

Using \eqref{ZJ-Def-Tran-ZtoW}, we can define another linear basis transformation operator $T_{\hat{W}}^{\bar{\tilde{W}}}: \hat{W} \rightarrow
\bar{\tilde{W}}$, which satisfies that for any $k=1, \cdots, N_f$
\begin{eqnarray}\label{ZJ-Basis-BarTildeW-HatW-Tran}
T_{\hat{W}}^{\bar{\tilde{W}}} \phi^{k}_{\Pi, l}=
\phi^{k}_{\Pi, l},~l=1,\cdots,
n_{\Pi}^k;~T_{\hat{W}}^{\bar{\tilde{W}}} \phi^{k}_{\Delta, l}=
\phi^{k,r}_{\Delta,l}
+\phi^{k,j}_{\Delta,l},~l=1,\cdots, n_{\Delta}^k,
\end{eqnarray}
and for any $k=1,\cdots,N_v$
\begin{eqnarray}\label{ZJ-Basis-BarTildeW-HatW-Tran-c}
T_{\hat{W}}^{\bar{\tilde{W}}} \psi^{k}_{l}=
\psi^{k}_{l},~l=1,\cdots,n_k^c,
\end{eqnarray}
where $\{\phi^{k}_{\Delta, l}\}$ and $\{\phi^{k}_{\Pi, l}\}$ are defined in \eqref{ZJ-Def-Bases-WkDel-WkPi}, and $\{\psi^k_l\}$ are defined in \eqref{def-Phik-v}.

From \eqref{ZJ-Basis-BarTildeW-HatW-Tran} and the definition
\eqref{def-D-operator-matrix} of the scaling operators
$D_{F_k}^{(r)}$ $(k \in \mathcal{M}_r, 1 \le r \le N_d)$, we can easily prove
that
\begin{eqnarray}\label{property-I-D}
T_{\hat{W}}^{\bar{\tilde{W}}} T_{W_{k}^{(r)}}^{W_k}  D_{F_k}^{(r)} w
= D_{F_k}^{(r)}(w + T^{W_k^{(j)}}_{W_k^{(r)}} w),~\forall w
\in W_{k,\Delta}^{(r)},
\end{eqnarray}
here we have used the assumption that $F_k = \partial D_r \cap \partial D_j$.

It follows from \eqref{def-operator-hat-S}, \eqref{def-operator-tilde-A}, \eqref{ZJ-Basis-BarTildeW-HatW-Tran} and \eqref{ZJ-Basis-BarTildeW-HatW-Tran-c} that
\begin{eqnarray}\label{ZJn-Relat-hatS-tildeS}
 \hat{S} = (T_{\hat{W}}^{\bar{\tilde{W}}})^H \tilde{S}
 T_{\hat{W}}^{\bar{\tilde{W}}}.
\end{eqnarray}

Combining \eqref{ZJ-Expression-MBDDC-Inv} and \eqref{ZJn-Relat-hatS-tildeS}, we can obtain the preconditioned operator as
\begin{eqnarray}\label{ZJ-oper-hatG}
\hat{G} =M_{BDDC}^{-1} \hat{S} = E_D \tilde{S}^{-1} (E_D)^H
(T_{\hat{W}}^{\bar{\tilde{W}}})^H \tilde{S}
T_{\hat{W}}^{\bar{\tilde{W}}}.
\end{eqnarray}

In the following, we will derive the upper bound of the condition number of $\hat{G}$.

For any $\tilde{w} \in \tilde{W}$, using \eqref{ZJ-TildeW-FuncDecomp}, we have
\begin{eqnarray}\label{def-PD-tildew}
\tilde{w} = \sum\limits_{r=1}^{N_d} \sum\limits_{k \in \mathcal{M}_r} w_{k,\Delta}^{(r)} + w_{\Pi},~~w_{\Pi} := \sum\limits_{k=1}^{N_f} w_{k,\Pi} + \sum\limits_{k=1}^{N_v} w_{k,c},
\end{eqnarray}
where
\begin{eqnarray}\label{def-PD-tildew-para}
w_{k,\Delta}^{(r)} = (\vec{w}_{k,\Delta}^{(r)})^T \Phi_{\Delta}^{k,r} \in W_{k,\Delta}^{(r)},~~w_{k,\Pi} = (\vec{w}_{k,\Pi})^T \Phi_{\Pi}^{k} \in W_{k,\Pi},~~w_{k,c} = (\vec{w}_{k,c})^T \Psi_{\Pi}^{k} \in W_{k,c},
\end{eqnarray}
here $\vec{w}_{k,\Delta}^{(r)} \in \mathbb{C}^{n_{\Delta}^k}$, $\vec{w}_{k,\Pi} \in \mathbb{C}^{n_{\Pi}^k}$ and $\vec{w}_{k,c} \in \mathbb{C}^{n_k^c}$.

Following Theorem 1 in \cite{LW2006}, we need to estimate the bound
\begin{align*}
\tilde{a}(P_D \tilde{w}, P_D \tilde{w}) \le C \tilde{a}(\tilde{w}, \tilde{w}),~\forall \tilde{w} \in \tilde{W},
\end{align*}
where $P_D: \tilde{W} \rightarrow \tilde{W}$ is a jump operator defined as
\begin{eqnarray}\label{def-Pd-0}
P_D = I_{\tilde{W}}^{\tilde{W}} - T_{\hat{W}}^{\bar{\tilde{W}}} E_D,
\end{eqnarray}
here $I_{\tilde{W}}^{\tilde{W}}$ is the identity operator on $\tilde{W}$, the linear basis transformation operator $T_{\hat{W}}^{\bar{\tilde{W}}}$ and the average operator $E_D$ are separately defined in \eqref{ZJ-Basis-BarTildeW-HatW-Tran} and \eqref{ZJ-Def-ED}.

\begin{lemma}\label{lemma-PDw}
For any $\tilde{w} \in \tilde{W}$, we have
\begin{eqnarray}\label{exp-Pdwi}
P_D \tilde{w}
= \sum\limits_{r=1}^{N_d} \sum\limits_{k \in \mathcal{M}_r}
(w_{D,\Delta}^{k,r} - \tilde{w}_{D,\Delta}^{k,r}),
\end{eqnarray}
where
\begin{eqnarray}\label{exp-Pdwi-para}
w_{D,\Delta}^{k,r}:= (\vec{w}_{k,\Delta}^{(r)})^T
\Phi_{D,\Delta}^{k,r},~~\tilde{w}_{D,\Delta}^{k,r} :=
(\vec{w}_{k,\Delta}^{(j)})^T \Phi_{D,\Delta}^{k,r},
\end{eqnarray}
here the basis function vector $\Phi_{D,\Delta}^{k,r}$ is defined in \eqref{ZJ-TDphik-ij-Def}.
\end{lemma}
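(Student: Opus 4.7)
The plan is a direct unfolding of the definition $P_D = I_{\tilde W}^{\tilde W} - T_{\hat W}^{\bar{\tilde W}} E_D$ in the basis decomposition \eqref{def-PD-tildew}--\eqref{def-PD-tildew-para}, isolating the cancellation interface-by-interface.

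First, write $\tilde w = \sum_{r} \sum_{k\in\mathcal M_r} w_{k,\Delta}^{(r)} + w_\Pi$ as in \eqref{def-PD-tildew}. Apply $E_D = Q_D + R_{\tilde W}^{W_\Pi}$ from \eqref{ZJ-Def-ED}. The piece $R_{\tilde W}^{W_\Pi}\tilde w = w_\Pi$, and \eqref{ZJ-Def-QD} together with \eqref{ZJ-Def-Oper-HatI-DelDi} and \eqref{def-R-property-1} give
\begin{equation*}
E_D\tilde w \;=\; w_\Pi + \sum_{r=1}^{N_d}\sum_{k\in\mathcal M_r} T_{W_k^{(r)}}^{W_k}\,D_{F_k}^{(r)}\, w_{k,\Delta}^{(r)}.
\end{equation*}

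Next, apply $T_{\hat W}^{\bar{\tilde W}}$. By \eqref{ZJ-Basis-BarTildeW-HatW-Tran}--\eqref{ZJ-Basis-BarTildeW-HatW-Tran-c} the primal part $w_\Pi = \sum_k w_{k,\Pi} + \sum_k w_{k,c}$ is fixed, and by the key identity \eqref{property-I-D},
\begin{equation*}
T_{\hat W}^{\bar{\tilde W}} T_{W_k^{(r)}}^{W_k} D_{F_k}^{(r)} w_{k,\Delta}^{(r)}
= D_{F_k}^{(r)} w_{k,\Delta}^{(r)} + D_{F_k}^{(r)} T_{W_k^{(r)}}^{W_k^{(j)}} w_{k,\Delta}^{(r)},
\end{equation*}
where $F_k = \partial D_r \cap \partial D_j$; the first summand sits in $W_{k,\Delta}^{(r)}$ and the second in $W_{k,\Delta}^{(j)}$. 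For each interior interface $F_k$, both orderings $(r,k)$ and $(j,k)$ appear in the double sum, so I gather the total contribution to the $r$-side of $F_k$ in $T_{\hat W}^{\bar{\tilde W}} E_D \tilde w$ as $D_{F_k}^{(r)} w_{k,\Delta}^{(r)} + D_{F_k}^{(j)} T_{W_k^{(j)}}^{W_k^{(r)}} w_{k,\Delta}^{(j)}$, and symmetrically on the $j$-side.

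Subtracting from $\tilde w$, the $\Pi$-part vanishes, and the $r$-side of $F_k$ in $P_D\tilde w$ simplifies, by $D_{F_k}^{(r)}+D_{F_k}^{(j)}=I$ (equation \eqref{def-D-trans}), to
\begin{equation*}
 w_{k,\Delta}^{(r)} - D_{F_k}^{(r)} w_{k,\Delta}^{(r)} - D_{F_k}^{(j)} T_{W_k^{(j)}}^{W_k^{(r)}} w_{k,\Delta}^{(j)}
= D_{F_k}^{(j)}\bigl(w_{k,\Delta}^{(r)} - T_{W_k^{(j)}}^{W_k^{(r)}} w_{k,\Delta}^{(j)}\bigr),
\end{equation*}
and analogously on the $j$-side. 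The final step is to recognize that this expression coincides with $w_{D,\Delta}^{k,r}-\tilde w_{D,\Delta}^{k,r}$ as defined in \eqref{exp-Pdwi-para}: using the coefficient vectors $\vec w_{k,\Delta}^{(r)}$ and $\vec w_{k,\Delta}^{(j)}$, together with the representations \eqref{ZJ-Dphik-ij-Def} and \eqref{ZJ-TDphik-ij-Def} of $\Phi_{D,\Delta}^{k,r} = (\vec T_\Delta^{F_k})^T(\vec D_{F_k}^{(j)})^T \Phi^{k,r}$, both expressions equal $(\vec w_{k,\Delta}^{(r)} - \vec w_{k,\Delta}^{(j)})^T \Phi_{D,\Delta}^{k,r}$. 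Summing the $r$- and $j$-side identities over all $r$ and $k\in\mathcal M_r$ yields \eqref{exp-Pdwi}.

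The main obstacle is purely bookkeeping: the operator $D_{F_k}^{(\nu)}$ is defined on each of $W_k$, $W_k^{(r)}$, $W_k^{(j)}$ via the same matrix \eqref{def-D-operator-matrix}, so one must be careful about which space each intermediate object lives in and verify that the transformation matrix $\vec T_\Delta^{F_k}$ commutes with the basis pushforwards in the required way (i.e.\ that $\vec c-\vec d = \vec T_\Delta^{F_k}(\vec a-\vec b)$ translates the ``$\Delta$-coordinate'' difference to the ``full-coordinate'' difference). Once this identification is made, the cancellation produced by \eqref{def-D-trans} is immediate and the claim drops out.
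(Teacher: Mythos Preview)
Your proposal is correct and follows essentially the same route as the paper's own proof: expand $P_D = I - T_{\hat W}^{\bar{\tilde W}}(Q_D + R_{\tilde W}^{W_\Pi})$, cancel the primal part, apply the identity \eqref{property-I-D}, re-index the double sum using that each $F_k$ is shared by exactly two subdomains, and invoke $D_{F_k}^{(r)}+D_{F_k}^{(j)}=I$ to obtain $D_{F_k}^{(j)}(w_{k,\Delta}^{(r)} - T_{W_k^{(j)}}^{W_k^{(r)}} w_{k,\Delta}^{(j)})$, which is then matched to $w_{D,\Delta}^{k,r}-\tilde w_{D,\Delta}^{k,r}$ via \eqref{ZJ-Dphik-ij-Def}--\eqref{ZJ-TDphik-ij-Def}. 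The only cosmetic difference is that the paper writes out the re-indexing as an explicit chain of equalities passing through $\sum_{k=1}^{N_f}$, whereas you phrase it as ``gathering the $r$-side contribution''; the content is identical.
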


\begin{proof}
By the definition \eqref{def-Pd-0} of $P_D$, \eqref{ZJ-Def-ED} of $E_D$, \eqref{ZJ-Basis-BarTildeW-HatW-Tran} of $T_{\hat{W}}^{\bar{\tilde{W}}}$, and using the decomposition \eqref{def-PD-tildew},
$P_D \tilde{w}$ can be rewritten as follows:
\begin{eqnarray}\label{lemmaLT-I-equ-proof-1-001-0}\nonumber
P_D \tilde{w}
&=&
\tilde{w} - T_{\hat{W}}^{\bar{\tilde{W}}}E_D \tilde{w}
\\\nonumber
&=&\tilde{w} - T_{\hat{W}}^{\bar{\tilde{W}}}(Q_D +  R_{\tilde{W}}^{W_{\Pi}}) \tilde{w}
\\\nonumber
&=& (\sum\limits_{r=1}^{N_d} \sum\limits_{k \in \mathcal{M}_r} w_{k,\Delta}^{(r)} + w_{\Pi}) - (T_{\hat{W}}^{\bar{\tilde{W}}}Q_D \tilde{w} + w_{\Pi})
\\
&=& \sum\limits_{r=1}^{N_d} \sum\limits_{k \in \mathcal{M}_r} w_{k,\Delta}^{(r)}
   - T_{\hat{W}}^{\bar{\tilde{W}}}Q_D \tilde{w}.
\end{eqnarray}

By using \eqref{ZJ-Def-QD}, \eqref{def-PD-tildew}, \eqref{def-R-property-1}
 and \eqref{property-I-D}, we find the second term in
\eqref{lemmaLT-I-equ-proof-1-001-0} satisfies~($F_k = \partial D_r \cap \partial D_j$)
\begin{eqnarray}\label{equ-IR-1} \nonumber
T_{\hat{W}}^{\bar{\tilde{W}}} Q_D \tilde{w}
&=& T_{\hat{W}}^{\bar{\tilde{W}}}
\sum\limits_{r=1}^{N_d} \hat{I}_{W_{\Delta}^{(r)}}^D \sum\limits_{k
\in \mathcal{M}_r} w_{k,\Delta}^{(r)}\\\nonumber
&=& \sum\limits_{r=1}^{N_d} \sum\limits_{k \in \mathcal{M}_r}
T_{\hat{W}}^{\bar{\tilde{W}}}  T^{W_k}_{W_k^{(r)}} D_{F_k}^{(r)}
w_{k,\Delta}^{(r)}\\
&=& \sum\limits_{r=1}^{N_d} \sum\limits_{k \in \mathcal{M}_r}
D_{F_k}^{(r)}(w_{k,\Delta}^{(r)} + T_{W_k^{(r)}}^{W_k^{(j)}}
w_{k,\Delta}^{(r)}).
\end{eqnarray}

Combining \eqref{lemmaLT-I-equ-proof-1-001-0} and \eqref{equ-IR-1},  and using
\eqref{def-D-trans}, \eqref{def-PD-tildew-para}, \eqref{ZJ-Basis-WkiToWk-Tran-ij},
\eqref{Zj-Def-Vec-TildPhiDel-1}, \eqref{ZJ-Dphik-ij-Def} and
\eqref{ZJ-TDphik-ij-Def}, we have
\allowdisplaybreaks
\begin{eqnarray*}
P_D \tilde{w}
&=& \sum\limits_{r=1}^{N_d} \sum\limits_{k \in
\mathcal{M}_r} w_{k,\Delta}^{(r)} - \sum\limits_{r=1}^{N_d}
\sum\limits_{k \in \mathcal{M}_r} D_{F_k}^{(r)}(w_{k,\Delta}^{(r)}
+ T_{W_k^{(r)}}^{W_k^{(j)}} w_{k,\Delta}^{(r)})\\\nonumber
&=& \sum\limits_{r=1}^{N_d} \sum\limits_{k \in \mathcal{M}_r}
(D_{F_k}^{(j)}w_{k,\Delta}^{(r)} - D_{F_k}^{(r)}
T_{W_k^{(r)}}^{W_k^{(j)}} w_{k,\Delta}^{(r)})\\\nonumber
&=& \sum\limits_{k=1}^{N_f} \left((D_{F_k}^{(j)}
w_{k,\Delta}^{(r)} - D_{F_k}^{(r)} T_{W_k^{(r)}}^{W_k^{(j)}}
w_{k,\Delta}^{(r)}) + (D_{F_k}^{(r)} w_{k,\Delta}^{(j)} -
D_{F_k}^{(j)} T_{W_k^{(j)}}^{W_k^{(r)}}
w_{k,\Delta}^{(j)})\right)\\\nonumber
&=& \sum\limits_{r=1}^{N_d} \sum\limits_{k \in
\mathcal{M}_r}D_{F_k}^{(j)}(w_{k,\Delta}^{(r)} -
T_{W_k^{(j)}}^{W_k^{(r)}} w_{k,\Delta}^{(j)})\\
&=& \sum\limits_{r=1}^{N_d} \sum\limits_{k \in \mathcal{M}_r}
(w_{D,\Delta}^{k,r} - \tilde{w}_{D,\Delta}^{k,r})
\end{eqnarray*}
where $w_{D,\Delta}^{k,r}$ and $\tilde{w}_{D,\Delta}^{k,r}$ are defined in \eqref{exp-Pdwi-para}. That is to say \eqref{exp-Pdwi} holds.
\end{proof}

\begin{lemma}\label{lemma-max}
For a given tolerance $\Theta \ge 1$, we can obtain the following estimate for $\tilde{w} \in \tilde{W}$
\begin{align*}
\tilde{a}(P_D \tilde{w}, P_D \tilde{w}) \le C \Theta \tilde{a}(\tilde{w}, \tilde{w}),
\end{align*}
where $C = 2 C_F^2$ and $C_F = \max\limits_{r}\{f_r\}$, here $f_r$ denotes the number of interface on $\partial D_r$. 
\end{lemma}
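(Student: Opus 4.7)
The plan is to leverage the explicit jump formula from Lemma~\ref{lemma-PDw}, apply the face-level bound \eqref{ZJ-Tphik-TDphik-Prop} produced by the generalized eigenvalue construction, and then collapse the resulting interface sums back into subdomain energies using Lemma~\ref{lemma-3-2}. First, I would use the additivity in \eqref{def-operator-tilde-A} to rewrite the left-hand side as $\tilde a(P_D\tilde w,P_D\tilde w)=\sum_{s=1}^{N_d} a_s((P_D\tilde w)^{(s)},(P_D\tilde w)^{(s)})$. Since $w_{D,\Delta}^{k,r}$ and $\tilde w_{D,\Delta}^{k,r}$ are both supported on $\bar D_r$, Lemma~\ref{lemma-PDw} yields the local representation $(P_D\tilde w)^{(s)}=\sum_{k\in\mathcal M_s}(w_{D,\Delta}^{k,s}-\tilde w_{D,\Delta}^{k,s})$. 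Combining the discrete Cauchy--Schwarz inequality $|\sum_{k\in\mathcal M_s}v_k|_{a_s}^2\le f_s\sum_{k\in\mathcal M_s}|v_k|_{a_s}^2$ with the elementary $|a-b|^2\le 2(|a|^2+|b|^2)$ reduces the task to controlling $\sum_{k\in\mathcal M_s}(|w_{D,\Delta}^{k,s}|_{a_s}^2+|\tilde w_{D,\Delta}^{k,s}|_{a_s}^2)$.

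For each interface $F_k=\partial D_s\cap\partial D_{j_k}$ I would then apply \eqref{ZJ-Tphik-TDphik-Prop} twice. Taking $\vec w_\Delta=\vec w_{k,\Delta}^{(s)}$ and $\vec w_\Pi=\vec w_{k,\Pi}$ gives $|w_{D,\Delta}^{k,s}|_{a_s}^2\le\Theta|\bar w_{k,\Delta}^{(s)}+\bar w_{k,\Pi}^{(s)}|_{a_s}^2$ after dropping the nonnegative second term on the left. The matrices $\vec A_{F_k}^D$ and $\vec B_{F_k}$ in \eqref{Def-A-B} are symmetric in $r,j$ and the parallel-sum bound \eqref{parallel-property} holds for either choice of $\nu$, so the same inequality is available with the labels $r,j$ taken as $j_k,s$ instead of $s,j_k$. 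Using it with $\vec w_\Delta=\vec w_{k,\Delta}^{(j_k)}$, together with the identification $\tilde w_{D,\Delta}^{k,s}=(\vec w_{k,\Delta}^{(j_k)})^T\Phi_{D,\Delta}^{k,s}$ provided by Lemma~\ref{lemma-PDw}, bounds the complementary term as $|\tilde w_{D,\Delta}^{k,s}|_{a_s}^2\le\Theta|\bar w_{k,\Delta}^{(j_k)}+\bar w_{k,\Pi}^{(j_k)}|_{a_{j_k}}^2$. After summing over $s$, the $\tilde w$-contributions can be reindexed by the involution $(s,k)\mapsto(j_k,k)$, so that they become a copy of the first family, with outside weight $f_{j_k}\le C_F$. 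This produces an intermediate bound of the form $\tilde a(P_D\tilde w,P_D\tilde w)\le c\,\Theta\sum_{s=1}^{N_d}f_s\sum_{k\in\mathcal M_s}|\bar w_{k,\Delta}^{(s)}+\bar w_{k,\Pi}^{(s)}|_{a_s}^2$ for a small absolute constant.

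Finally I would feed this into Lemma~\ref{lemma-3-2}. Using \eqref{ZJ-TDphik-ij-Def}, one can write $\bar w_{k,\Delta}^{(s)}+\bar w_{k,\Pi}^{(s)}=(\vec v_k^{(s)})^T\bar\Phi^{k,s}$ with $\vec v_k^{(s)}:=\vec T_\Delta^{F_k}\vec w_{k,\Delta}^{(s)}+\vec T_\Pi^{F_k}\vec w_{k,\Pi}$; the associated $w_k^{(s)}=(\vec v_k^{(s)})^T\Phi^{k,s}$ then coincides with $\tilde w_{k,\Delta}^{(s)}+T_{W_k}^{W_k^{(s)}}\tilde w_{k,\Pi}$, so that the "$w^{(s)}$" produced by the hypothesis of Lemma~\ref{lemma-3-2} agrees with the trace $\tilde w^{(s)}$ in \eqref{def-operator-tilde-S}. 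The lemma then yields $\sum_{k\in\mathcal M_s}|\bar w_{k,\Delta}^{(s)}+\bar w_{k,\Pi}^{(s)}|_{a_s}^2\le f_s\,a_s(\tilde w^{(s)},\tilde w^{(s)})\le C_F\,a_s(\tilde w^{(s)},\tilde w^{(s)})$, and summing over $s$ gives the asserted inequality $\tilde a(P_D\tilde w,P_D\tilde w)\le C\Theta\,\tilde a(\tilde w,\tilde w)$ with $C$ proportional to $C_F^2$.

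The main obstacle I anticipate is this last identification step: rewriting the dual-plus-primal sum $\bar w_{k,\Delta}^{(s)}+\bar w_{k,\Pi}^{(s)}$ in the original basis of $\bar W_k^{(s)}$ as a single $(\vec v_k^{(s)})^T\bar\Phi^{k,s}$ so that Lemma~\ref{lemma-3-2} applies directly. It hinges on the invertibility of the transformation $\vec T_{F_k}$ in \eqref{ZJ-Oper-TFk-DelPi-Def} and on the compatibility between the barred and unbarred versions of the dual-primal transform in \eqref{ZJ-TDphik-ij-Def}, which must be unpacked carefully. A secondary piece of bookkeeping is the reindexing $(s,k)\mapsto(j_k,k)$, since the two face-norm bounds initially live in different $a_\nu$-norms before being recombined.
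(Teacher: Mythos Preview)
Your proposal is correct and follows the same skeleton as the paper's proof: localize via \eqref{def-operator-tilde-A} and Lemma~\ref{lemma-PDw}, expand by Cauchy--Schwarz, invoke the eigenvalue bound \eqref{ZJ-Tphik-TDphik-Prop}, and finish with Lemma~\ref{lemma-3-2}. Your identification of $\bar w_{k,\Delta}^{(s)}+\bar w_{k,\Pi}^{(s)}$ with $(\vec v_k^{(s)})^T\bar\Phi^{k,s}$ so that Lemma~\ref{lemma-3-2} applies is exactly what is needed, and the paper takes this step for granted.

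The one substantive difference is the order in which you reindex and apply \eqref{ZJ-Tphik-TDphik-Prop}. The paper reindexes \emph{first}: it rewrites $\sum_r\sum_{k\in\mathcal M_r}\bigl(|w_{D,\Delta}^{k,r}|_{a_r}^2+|\tilde w_{D,\Delta}^{k,r}|_{a_r}^2\bigr)$ as $\sum_r\sum_{k\in\mathcal M_r}\bigl(|w_{D,\Delta}^{k,r}|_{a_r}^2+|\tilde w_{D,\Delta}^{k,j}|_{a_j}^2\bigr)$ by the face involution, and this pair is precisely the left-hand side of \eqref{ZJ-Tphik-TDphik-Prop}, so a single application per $(r,k)$ suffices. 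You instead apply \eqref{ZJ-Tphik-TDphik-Prop} twice (once with each labeling, dropping a nonnegative term each time) and reindex afterward. Both are valid, but your route loses a factor of~$2$, yielding $C=4C_F^2$ rather than the stated $C=2C_F^2$. If you want the sharp constant in the lemma, reindex before invoking \eqref{ZJ-Tphik-TDphik-Prop}.
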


\begin{proof}
In view of the definition \eqref{def-operator-tilde-A} of the sesquilinear form $\tilde{a}(\cdot,\cdot)$, \eqref{def-normAi} of the semi-norm $|\cdot|_{a_r}$, and the decompositions \eqref{def-PD-tildew} and \eqref{exp-Pdwi}, it is equivalent to show that
\begin{eqnarray}\label{Pb-max-eigenvalue-estimate-equiv-1-000-a}
\sum\limits_{r=1}^{N_d} |(P_D \tilde{w})^{(r)}|_{a_r}^2 \le C
\Theta \sum\limits_{r=1}^{N_d} |\tilde{w}^{(r)}|_{a_r}^2,
\end{eqnarray}
where
\begin{eqnarray}\label{PDi}
\tilde{w}^{(r)} = \sum\limits_{k \in \mathcal{M}_r} (w_{k,\Delta}^{(r)} + w_{k,\Pi}^{(r)}) + \sum\limits_{k\in \mathcal{M}_r^c} w_{k,c}^{(r)},~~(P_D \tilde{w})^{(r)} =  \sum\limits_{k \in \mathcal{M}_r} (w_{D,\Delta}^{k,r} - \tilde{w}_{D,\Delta}^{k,r}),
\end{eqnarray}
here
\begin{align*}
w_{k,\Pi}^{(r)} = (\vec{w}_{k,\Pi})^T \Phi_{\Pi}^{k,r} \in W_{k,\Pi}^{(r)},~~w_{k,c}^{(r)} = (\vec{w}_{k,c})^T \Psi^{k,r} \in W_{k,c}^{(r)}.
\end{align*}

By \eqref{PDi},
$C_F = \max\limits_{r}\{f_r\}$,
the conditions  \eqref{ZJ-Tphik-TDphik-Prop} associate with $T_{F_k} (k\in \mathcal{M}_r, 1\le r\le N_d)$, and  \eqref{4-1-2-tildew}, we have 
\allowdisplaybreaks
\begin{eqnarray}\label{final-ineq-1}\nonumber
\sum\limits_{r=1}^{N_d} |(P_D\tilde{w})^{(r)}|_{a_r}^2
&=&\sum\limits_{r=1}^{N_d} |\sum\limits_{k \in
\mathcal{M}_r}(w_{D,\Delta}^{k,r} -
\tilde{w}_{D,\Delta}^{k,r})|_{a_r}^{2}\\\nonumber
&\le& 2 C_F
\sum\limits_{r=1}^{N_d}\sum\limits_{k \in
\mathcal{M}_r}\left(|w_{D,\Delta}^{k,r}|_{a_r}^2 +
|\tilde{w}_{D,\Delta}^{k,r}|_{a_r}^2 \right)\\\nonumber
&=& 2 C_F \sum\limits_{r=1}^{N_d} \sum\limits_{k \in \mathcal{M}_r}
\left(|w_{D,\Delta}^{k,r}|_{a_r}^2 +
|\tilde{w}_{D,\Delta}^{k,j}|_{a_j}^2\right)\\ \nonumber
&\le& 2 C_F \Theta \sum\limits_{r=1}^{N_d} \sum\limits_{k \in
\mathcal{M}_r} |\bar{w}_{k,\Delta}^{(r)} +
\bar{w}_{k,\Pi}^{(r)}|_{a_r}^2\\
&\le& 2 C_F^2 \Theta \sum\limits_{r=1}^{N_d}
|\tilde{w}^{(r)}|_{a_r}^2
\end{eqnarray}
where
\begin{align*}
\tilde{w}_{D,\Delta}^{k,j} := (\vec{w}_{k,\Delta}^{(r)})^T \Phi_{D,\Delta}^{k,j},~~
\bar{w}_{k,\Delta}^{(r)} = (\vec{w}_{k,\Delta}^{(r)})^T \bar{\Phi}_{\Delta}^{k,r} \in \bar{W}_{k,\Delta}^{(r)},~~
\bar{w}_{k,\Pi}^{(r)}  = (\vec{w}_{k,\Pi})^T \bar{\Phi}_{\Pi}^{k,r}  \in \bar{W}_{k,\Pi}^{(r)}.
\end{align*}

Finally, the estimate \eqref{Pb-max-eigenvalue-estimate-equiv-1-000-a} follows from \eqref{final-ineq-1}.
\end{proof}

By Lemma \ref{lemma-max} and following Theorem 1 in \cite{LW2006},  we obtain:
\begin{theorem}\label{condition-number}
For a given tolerance $\Theta \ge 1$, the condition number bound of the two-level adaptive BDDC preconditioned systems $\hat{G}$
\begin{align*}
\kappa(\hat{G}) \le C \Theta,
\end{align*}
where  $C$ is a constant which is just depending on the maximum number of interfaces per subdomain.
\end{theorem}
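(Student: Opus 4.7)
The plan is to appeal to the abstract condition-number framework for BDDC preconditioners (essentially Theorem~1 of \cite{LW2006}), which reduces the estimate on $\kappa(\hat{G})$ to the single Rayleigh-quotient inequality $\tilde{a}(P_D \tilde{w}, P_D \tilde{w}) \le C_0\, \tilde{a}(\tilde{w}, \tilde{w})$ for all $\tilde{w} \in \tilde{W}$. That inequality is exactly Lemma \ref{lemma-max} with $C_0 = 2 C_F^2 \Theta$, so the whole proof reduces to checking that the abstract framework transfers to our complex-coefficient, HPD setting and assembling the spectral bounds.

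I would first verify the lower eigenvalue bound $\lambda_{\min}(\hat{G}) \ge 1$. Writing $\hat{G} = E_D \tilde{S}^{-1} (E_D)^H \hat{S}$ and using \eqref{ZJn-Relat-hatS-tildeS}, this is equivalent to the statement that $E_D T_{\hat{W}}^{\bar{\tilde{W}}}$ acts as the identity on $\hat{W}$. The $\Pi$-part is immediate from \eqref{ZJ-Basis-BarTildeW-HatW-Tran}, \eqref{ZJ-Basis-BarTildeW-HatW-Tran-c} and $R_{\tilde{W}}^{W_\Pi}$ being a restriction; the $\Delta$-part follows by applying \eqref{ZJ-Def-QD}, \eqref{def-R-property-1}, \eqref{property-I-D} and the partition-of-unity property \eqref{def-D-trans} of the scaling operators, $D_{F_k}^{(r)} + D_{F_k}^{(j)} = I$. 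In Hermitian sesquilinear form this says $(I - P_D)$ is a $\tilde{S}$-orthogonal projector onto the image of $T_{\hat{W}}^{\bar{\tilde{W}}}$.

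Next I would obtain the upper eigenvalue bound. The standard BDDC identity, derived from the variational characterization of $M_{BDDC}^{-1}$ in Algorithm \ref{chap2-ZJ-algorithm-variation-BDDC-domain-simple}, gives
\begin{equation*}
\frac{(\hat{S} \hat{w}, \hat{w})}{(M_{BDDC}\, \hat{w}, \hat{w})}
\;=\; \frac{\tilde{a}\bigl((I - P_D)\tilde{w},(I - P_D)\tilde{w}\bigr)}{\tilde{a}(\tilde{w},\tilde{w})},
\end{equation*}
where $\tilde{w} \in \tilde{W}$ is the $\tilde{a}$-minimizer in the fiber $E_D \tilde{w} = \hat{w}$. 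Combining this with the triangle inequality $\tilde{a}((I - P_D)\tilde{w},(I - P_D)\tilde{w})^{1/2} \le \tilde{a}(\tilde{w},\tilde{w})^{1/2} + \tilde{a}(P_D \tilde{w}, P_D \tilde{w})^{1/2}$ and inserting Lemma \ref{lemma-max} produces an upper bound of the form $\bigl(1 + \sqrt{2}\,C_F \sqrt{\Theta}\bigr)^2$, which for $\Theta \ge 1$ is absorbed into $C\Theta$ with $C$ depending only on $C_F = \max_r f_r$. Together with Step 1, this yields $\kappa(\hat{G}) \le C\Theta$.

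The main obstacle is a technical one rather than a conceptual one: confirming that Theorem 1 of \cite{LW2006}, originally phrased for real symmetric positive-definite BDDC operators, continues to apply to our complex, Hermitian PWLS setting. All min-max / Rayleigh-quotient arguments survive verbatim once the transpose is replaced by the conjugate transpose, because both $\hat{S}$ and $\tilde{S}$ were shown to be HPD (the former by coercivity of $a(\cdot,\cdot)$ restricted to $\hat{W}$, the latter by \eqref{def-operator-tilde-A} together with Lemma \ref{lemma-ar}), and because the scaling operators $D_{F_k}^{(\nu)}$ satisfy the complex partition-of-unity \eqref{def-D-trans}. Once these pieces are in place, the condition-number bound $C\Theta$ follows mechanically from Lemma \ref{lemma-max}.
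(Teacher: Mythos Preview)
Your proposal is correct and follows essentially the same approach as the paper: the paper's proof is literally the one line ``By Lemma~\ref{lemma-max} and following Theorem~1 in \cite{LW2006}'', and you have simply unpacked what that citation entails (the identity $E_D T_{\hat{W}}^{\bar{\tilde{W}}}=I$ for the lower bound, the jump-operator estimate for the upper bound, and the routine transfer from the real SPD to the complex HPD setting). The only minor difference is that your upper-bound step goes through a triangle inequality to reach $(1+\sqrt{2}\,C_F\sqrt{\Theta})^2$, whereas the direct Li--Widlund argument bounds $\lambda_{\max}(\hat{G})$ by the constant $C\Theta$ from Lemma~\ref{lemma-max} itself; both yield $\kappa(\hat{G})\le C\Theta$ with $C$ depending only on $C_F$.
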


\section{Numerical results}\setcounter{equation}{0}

In this section, we present examples with constant and variable wave number $\kappa$ to investigate the convergence properties of the adaptive BDDC preconditioners proposed in this paper.

In the following numerical tests, the given region $\Omega$ is divided into $N_h$ quadrilateral mesh $\mathcal{T}_h$,
where $h$ denotes the size of elements.
Let $p$ denote the number of plane wave bases in each element. Therefore, it is easy to known that the number of dofs
in the plane wave space $V_p(\mathcal{T}_h)$ is $N_h \times p$.
Since the special interface is needed in this paper, and to guarantee the load balance,
we decompose $\mathcal{T}_h$ into some subdomains which satisfy that the number of complete elements in each subdomain is the same and denotes as $n$.
For example if we set $N_h = 35^2$ and the number of subdomains $N_d = 4^2$, then $n = 8^2$.

Since the stiffness matrix of the PWLS method is HPD,
we solve the Schur complement system \eqref{def-schur-system} by PCG method, and the iteration is stopped either the relative residual is reduced by the factor of $10^{-5}$ or the iteration counts are greater than 100. These algorithms are implemented using Matlab
and run in a machine with Intel(R) Xeon(R) CPU E5-2650 v2 2.60 GHz and 96 GB memory.

Numerical experiment results show that the modulus of the eigenvalue $\lambda$ of the generalized eigenvalue problems \eqref{eig-pro-intro} satisfies $|\lambda| \ge 1$, hence, in our adaptive BDDC algorithms, we set the tolerance $\Theta = 1+ log(\min\limits_{1 \le r \le N_d}\{n_x^{(r)}, n_y^{(r)}\})$ for a given mesh partition,
where $n_x^{(r)}, n_y^{(r)}$ are separately the number of complete and part elements in $x$ and $y$ direction of subdomain $D^{(r)} (r=1,\cdots,N_d)$.
The transform matrix $\vec{T}_{F_k}$ in each interface $F_k$ is defined in \eqref{ZJ-Oper-TFk-DelPi-Def}.
Therefore, the algorithm is uniquely determined by the scaling matrices $\vec{D}_{F_k}^{(\nu)}(\nu=r,j)$ for each interface.
In the following experiments, we separately denote the algorithm with $\vec{D}_{F_k}^{(\nu)}(\nu=r,j, k=1,\cdots,N_f)$ defined in \eqref{DiFD} and \eqref{DiFD-deluxe} as method1 and method2.
We will apply both methods to three typical examples to investigate the scalability of these methods
measured by the mesh size, number of subdomains and angular frequency (or wave number).
Since the wave number $\kappa = \omega/c$, where $\omega$ and $c$ are separately the angular frequency and the wave speed,
we can react the variousness of the wave number to the angular frequency and the wave speed.

In all of the tables in this section,  iter is the number of iterations for the PCG algorithms, $\lambda_{\min}$ and $\lambda_{\max}$ separately denote the minimum  and maximum eigenvalues of the preconditioned system,  pnum is the number of primal unknowns, the average number of primal unknowns per interface are given in the parentheses, and the proportion of the total number of primal unknowns to the total number of interface dofs are denoted as ppnum.

Before numerical studies are performed, three typical examples are given here.

\begin{example}{\bf (Constant medium)}\label{Example-1}
\cite{HZ2016} Consider model problem \eqref{model equation}, where $\Omega = (0,2) \times (0,1)$, the wave speed $c \equiv 1$, the exact solution of the problem can be expressed as
\begin{align*}
u_{ex} = \cos(12 \pi y)(A_1 e^{-i \omega_x x} + A_2 e^{i \omega_x x}),
\end{align*}
here $\omega_x = \sqrt{\omega^2 - (12 \pi)^2}$, and coefficients $A_1$ and $A_2$ satisfy the equation
\begin{align*}
& \left(
\begin{array}{cc}
\omega_x  & -\omega_x \\
(\omega - \omega_x)e^{-2i \omega_x} & (\omega + \omega_x)e^{2i \omega_x}
\end{array}
\right) \left(\begin{array}{c}
A_1 \\
A_2
\end{array}\right)
= \left(\begin{array}{c}
-i\\
0
\end{array}\right).
\end{align*}
\end{example}

\begin{example}{\bf (Piecewise constant medium)}\label{Example-2}
Consider model problem \eqref{model equation}, which is a variant of the Marmousi model in \cite{S2013} or \cite{HZ2016},
where $\Omega = (0,7200)\times(0,3600)$, and the wave speed $c$ is defined by (see figure \ref{fig-piecewise-constant} for illustration)
\begin{equation*}
c(x,y) = \left\{\begin{array}{ll}
1800 & \mbox{if}~y \in [0,1200]\\
3600 & \mbox{if}~y \in [1200,2400]\\
5400 & \mbox{if}~y \in [2400,3600]
\end{array}\right.~~x\in [0,7200],
\end{equation*}
and $g = x^2 + y^2$.
\begin{figure}[H]
  \centering
  \includegraphics[width=0.7\textwidth]{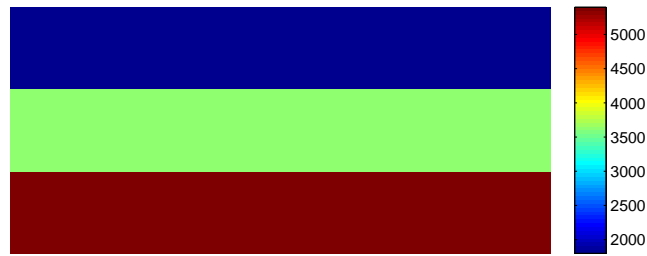}\\
  \caption{Piecewise constant medium.}\label{fig-piecewise-constant}
\end{figure}
\end{example}

\begin{example}{\bf (Random medium)}\label{Example-3}
Consider Example \ref{Example-2}, where $c(x,y)$  is chosen randomly from $[1500,5500]$ for each grid element, as shown in Figure \ref{fig-random},
which can be seen as a complex version of the Marmousi model in \cite{S2013}.
\begin{figure}[H]
  \centering
  \includegraphics[width=0.7\textwidth]{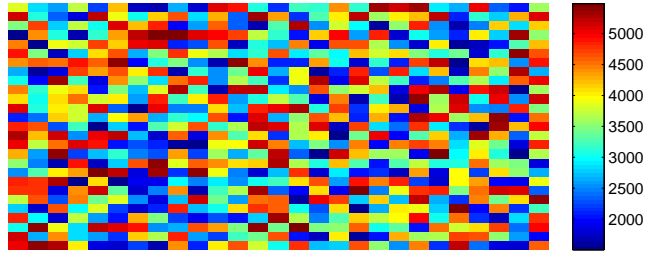}\\
  \caption{Random medium.}\label{fig-random}
\end{figure}
\end{example}

\subsection{Two-level results}

\subsubsection{Scalability study with respect to angular frequency}

In this subsection, we investigate the influence of the angular frequency (or the wave number) to the efficiency of our adaptive BDDC algorithms.
This numerical studies are carried on a $N_d = 4^2$ subdomain partition with different frequency $\omega$.
The relative $L^2$ error presented in Table \ref{E-table-1} is defined as
\begin{align*}
\frac{\|u_{ex} - u_h\|_{L^2(\Omega)}}{\|u_{ex}\|_{L^2(\Omega)}},
\end{align*}
where $u_{ex}$ and $u_h$ are separately the exact solution and the numerical solution of this problem.
We kept $\omega h$ as a constant and slightly increase $p$ or decrease $h$ to control the relative error less than $5\times 10^{-3}$.
In this article, since the errors of the approximate solutions are not our main interest, we only list the relative $L^2$ errors in Table \ref{E-table-1}
for Example \ref{Example-1}.

As seen in Table \ref{E-table-1}, with the same stopping criterion, the relative $L^2$ errors of the approximate solutions of Example \ref{Example-1} generated by method1 and method2 do not have large differences.
\begin{table}[H]
\centering\caption{
Relative error of Example \ref{Example-1} solved by method1 and method2.
}
\label{E-table-1}\vskip 0.1cm
\begin{tabular}{{|c|c|c|c|c|}}\hline
\multirow{2}{*}{$\omega$}& \multirow{2}{*}{$p$}    &\multirow{2}{*}{$N_h$}    & \multicolumn{2}{c|}{method} \\\cline{4-5}
                         &                         &                          &  method1          & method2 \\\hline
$20\pi$                  & $13$                    & $35^2$                   &  1.160e-04  &1.163e-04\\\hline
$40\pi$                  & $13$                    & $63^2$                   &  9.293e-04  &9.266e-04\\\hline
$80\pi$                  & $14$                    & $119^2$                  &  9.952e-04  &9.910e-04\\\hline
$160\pi$                 & $14$                    & $255^2$                  &  5.804e-04  &5.793e-04\\\hline
$320\pi$                 & $15$                    & $511^2$                  &  3.091e-03  &3.091e-03\\\hline
\end{tabular}
\end{table}

The dependence of the iteration counts on the angular frequency is presented in Table \ref{E-table-2}, Table \ref{E-table-3} and Table \ref{E-table-4} for various examples, and a weak dependency relationship is clearly shown.
In addition, from these tables, we can see that the total number of primal unknowns (pnum) increased with the increase of the angular frequency for both method1 and method2, especially when $\omega = 320\pi$ in Example \ref{Example-1}, the dofs of the coarse problem has reached more than thirty thousand in method1.
The number of primal unknowns also highlights the superiority of method2
over method1.

\begin{table}[H]
\centering\caption{
Scalability study with respect to the angular frequency: Example \ref{Example-1}.
}
\label{E-table-2}\vskip 0.1cm
\begin{tabular}{{|c|c|c|c|c|c|c|c|c|}}\hline
$\omega$                 & $p$                     &$N_h$                   & method  & $\lambda_{\min}$ & $\lambda_{\max}$ & pnum &ppnum  & iter \\\hline
\multirow{2}{*}{$20\pi$} & \multirow{2}{*}{$13$}   &\multirow{2}{*}{$35^2$}   &method1   &1.0003  &2.3717   &2019(84.13)  &81.75\%    &7  \\
                         &                         &                          &method2   &1.0003  &3.3375   &639(26.63)   &28.93\%    &8  \\\hline
\multirow{2}{*}{$40\pi$} & \multirow{2}{*}{$13$}   &\multirow{2}{*}{$63^2$}   &method1   &1.0002  &2.3538   &3672(153.00) &78.99\%    &7  \\
                         &                           &                        &method2   &1.0002  &2.0633   &995(39.79)   &22.35\%    &6  \\\hline
\multirow{2}{*}{$80\pi$} & \multirow{2}{*}{$14$}   &\multirow{2}{*}{$119^2$}  &method1   &1.0000  &2.3496   &7710(321.25) &79.39\%    &6  \\
                         &                         &                          &method2   &1.0002  &4.2066   &1608(67.00)  &17.57\%    &8  \\\hline
\multirow{2}{*}{$160\pi$} & \multirow{2}{*}{$14$}  &\multirow{2}{*}{$255^2$}  &method1   &1.0000  &2.3582   &16686(695.25)&78.95\%    &5  \\
                         &                         &                          &method2   &1.0001  &5.0005   &2869(119.54) &14.06\%    &11 \\\hline
\multirow{2}{*}{$320\pi$} & \multirow{2}{*}{$15$}  &\multirow{2}{*}{$511^2$}  &method1   &1.0000  &2.7400   &36630(1526.25) & 80.18\%    &6   \\
                         &                         &                          &method2   &1.0000  &3.6244   &5444(226.83) &12.17\%    &9 \\\hline
\end{tabular}
\end{table}


\begin{table}[H]
\centering\caption{Scalability study with respect to the angular frequency: Example \ref{Example-2}}
\label{E-table-3}\vskip 0.1cm
\begin{tabular}{{|c|c|c|c|c|c|c|c|c|}}\hline
$\omega$                 & $p$                     &$N_h$                   & method  & $\lambda_{\min}$ & $\lambda_{\max}$ & pnum &ppnum  & iter\\\hline
\multirow{2}{*}{$20\pi$} & \multirow{2}{*}{$13$}   &\multirow{2}{*}{$35^2$}   &method1   &1.0003  &3.6775   &1997(83.21)  &80.00\%    &9  \\
                         &                         &                        &method2   &1.0000  &5.1027   &653(27.12)   &26.16\%    &7  \\\hline
\multirow{2}{*}{$40\pi$} & \multirow{2}{*}{$13$}   &\multirow{2}{*}{$63^2$}   &method1   &1.0000  &3.2890   &3526(146.92) &75.34\%    &7  \\
                         &                         &                        &method2   &1.0000  &4.1102   &868(36.17)   &18.55\%    &7  \\\hline
\multirow{2}{*}{$80\pi$} & \multirow{2}{*}{$13$}   &\multirow{2}{*}{$119^2$}  &method1   &1.0000  &3.8117   &6389(266.21) &79.99\%    &8 \\
                         &                         &                        &method2   &1.0000  &4.5624   &1165(48.54)  &13.99\%    &9  \\\hline
\multirow{2}{*}{$160\pi$} & \multirow{2}{*}{$13$}  &\multirow{2}{*}{$255^2$}  &method1   &1.0000  &4.5980   &13629(567.88)&69.52\%    &9 \\
                         &                         &                        &method2   &1.0000  &5.0878   &1807(75.29)  &9.73\%     &10 \\\hline
\end{tabular}
\end{table}

\begin{table}[H]
\centering\caption{Scalability study with respect to the angular frequency: Example \ref{Example-3}}
\label{E-table-4}\vskip 0.1cm
\begin{tabular}{{|c|c|c|c|c|c|c|c|c|}}\hline
$\omega$                 & $p$                     &$N_h$                   & method  & $\lambda_{\min}$ & $\lambda_{\max}$ & pnum &ppnum  & iter\\\hline
\multirow{2}{*}{$20\pi$} & \multirow{2}{*}{$13$}   &\multirow{2}{*}{$35^2$}   &method1   &1.0011  &2.9146   &1896(79.00)  &77.04\%    &9  \\
                         &                         &                        &method2   &1.0027  &2.2602   &737(30.71)   &32.68\%    &8  \\\hline
\multirow{2}{*}{$40\pi$} & \multirow{2}{*}{$13$}   &\multirow{2}{*}{$63^2$}   &method1   &1.0033  &3.5898   &2905(121.04) &63.00\%    &11  \\
                         &                         &                        &method2   &1.0013  &2.5865   &687(28.63)   &16.76\%    &8  \\\hline
\multirow{2}{*}{$80\pi$} & \multirow{2}{*}{$14$}   &\multirow{2}{*}{$119^2$}  &method1   &1.0056  &4.3190   &5459(227.46) &56.59\%    &12 \\
                         &                         &                        &method2   &1.0006  &3.0389   &786(32.75)   &9.24\%     &8  \\\hline
\multirow{2}{*}{$160\pi$} & \multirow{2}{*}{$14$}  &\multirow{2}{*}{$255^2$}  &method1   &1.0030  &5.1083   &10661(444.21)&50.66\%  &13 \\
                         &                         &                        &method2   &1.0002  &3.2627   &801(33.38)  &4.35\%     &9 \\\hline
\end{tabular}
\end{table}

\subsubsection{Scalability study with respect to number of subdomains}

In the second subsection, we set the number of the complete elements in each subdomain $n$ is $8^2$, $p=10$ and
the frequency to $\omega = 20\pi$. The effect of number of the subdomains is explored
here by adding $N_d$ from $3^2$ to $6^2$.

Results of these experiments are shown in Table \ref{E-table-5}, Table \ref{E-table-6}
and Table \ref{E-table-7}. From these tables, we can see that the iteration counts are almost independent on the number of subdomains, and as the number of subdomains increases, the total number of primal unknowns increase, but the average number of primal unknowns per interface almost remains the same.
Apart from this, one can see that for these three examples, method2 still has an absolute advantage in the number of the primal dofs over method1,
in particular, less than half of the primal dofs are required for method2 to ensure the convergence stability.

\begin{table}[H]
\centering\caption{Scalability study with respect to number of subdomains: Example \ref{Example-1}.}
\label{E-table-5}\vskip 0.1cm
\begin{tabular}{{|c|c|c|c|c|c|c|}}\hline
$N_d$                   & method  & $\lambda_{\min}$ & $\lambda_{\max}$ & pnum &ppnum  & iter\\\hline
\multirow{2}{*}{$3^2$}    &method1   &1.0001  &2.4500   &692(57.67)   &73.20\%    &7  \\
                        &method2   &1.0011  &2.6582   &272(22.67)   &31.20\%    &7  \\\hline
\multirow{2}{*}{$4^2$}    &method1   &1.0001  &2.6281   &1416(59.00)  &74.93\%    &7  \\
                        &method2   &1.0007  &1.9983   &554(23.08)   &32.04\%    &6  \\\hline
\multirow{2}{*}{$5^2$}    &method1   &1.0002  &2.7296   &2368(59.20)  &75.24\%    &7 \\
                        &method2   &1.0007  &2.5805   &885(22.13)   &31.10\%    &8  \\\hline
 \multirow{2}{*}{$6^2$}    &method1   &1.0001  &2.8044   &3560(59.33)  &75.45\%    &7 \\
                        &method2   &1.0008  &2.6865   &1274(21.23)  &30.18\%    &8 \\\hline
\end{tabular}
\end{table}

\begin{table}[H]
\centering\caption{Scalability study with respect to number of subdomains: Example \ref{Example-2}.}
\label{E-table-6}\vskip 0.1cm
\begin{tabular}{{|c|c|c|c|c|c|c|}}\hline
$N_d$                   & method  & $\lambda_{\min}$ & $\lambda_{\max}$ & pnum &ppnum  & iter\\\hline
\multirow{2}{*}{$3^2$}    &method1   &1.0003  &2.9381   &639(53.25)   &67.90\%    &8  \\
                        &method2   &1.0000  &2.5374   &244(20.33)   &28.40\%    &7  \\\hline
\multirow{2}{*}{$4^2$}    &method1   &1.0001  &2.7952   &1345(56.04)  &71.39\%    &7  \\
                        &method2   &1.0000  &2.4366   &487(20.29)   &28.71\%    &6  \\\hline
\multirow{2}{*}{$5^2$}    &method1   &1.0001  &2.8565   &2332(58.30)  &74.17\%    &7 \\
                        &method2   &1.0000  &2.9097   &867(21.68)   &30.57\%    &6  \\\hline
 \multirow{2}{*}{$6^2$}   &method1   &1.0002  &2.8892   &3575(59.58)  &75.74\%    &7 \\
                        &method2   &1.0000  &2.9908   &1337(22.28)  &31.43\%    &7 \\\hline
\end{tabular}
\end{table}

\begin{table}[H]
\centering\caption{Scalability study with respect to number of subdomains: Example \ref{Example-3}.}
\label{E-table-7}\vskip 0.1cm
\begin{tabular}{{|c|c|c|c|c|c|c|}}\hline
$N_d$                   & method  & $\lambda_{\min}$ & $\lambda_{\max}$ & pnum &ppnum  & iter\\\hline
\multirow{2}{*}{$3^2$}    &method1   &1.0017  &3.0289   &549(45.74)   &58.90\%    &9  \\
                        &method2   &1.0017  &2.0701   &187(15.58)   &22.70\%    &7  \\\hline
\multirow{2}{*}{$4^2$}    &method1   &1.0015  &3.0633   &1231(51.29)  &65.72\%    &10  \\
                        &method2   &1.0021  &2.2055   &452(18.83)   &26.97\%    &7  \\\hline
\multirow{2}{*}{$5^2$}    &method1   &1.0012  &3.0412   &2204(55.10)  &70.36\%    &10 \\
                        &method2   &1.0024  &2.3148   &821(20.53)   &29.20\%    &8  \\\hline
 \multirow{2}{*}{$6^2$}   &method1   &1.0013  &3.0357   &3461(57.63)  &73.49\%    &9 \\
                        &method2   &1.0020  &2.3250   &1348(22.47)  &31.64\%    &8 \\\hline
\end{tabular}
\end{table}

\subsubsection{Scalability study with respect to mesh size}

Finally, in this subsection, we consider the scalability with respect to mesh size, where $p=9$, the number of the subdomains and the angular frequency are both held constant ($N_d = 4^2$, $\omega = 20\pi$), but the number of complete elements in each subdomain varied from $n=6^2$ to $n=24^2$.

The iteration efficiency as a function of the mesh size are reported in Table \ref{E-table-8}, Table \ref{E-table-9} and Table \ref{E-table-10} for method1 and method2. We can see from these tables that the iteration counts are mildly dependent on the mesh size. The minimum eigenvalues of the preconditioned systems are larger than 1 and the maximum eigenvalues are mildly dependent on the mesh size.
As the mesh size decrease, there is a great difference between method1 and method2 in the number of the primal unknowns even for constant medium, which is different from the adaptive BDDC algorithms for the two-order elliptic problems.


\begin{table}[H]
\centering\caption{Scalability study with respect to mesh size: Example \ref{Example-1}} 
\label{E-table-8}\vskip 0.1cm
\begin{tabular}{{|c|c|c|c|c|c|c|c|}}\hline
$n$                     & $\Theta$                 & method   &$\lambda_{\min}$ & $\lambda_{\max}$ & pnum &ppnum  & iter\\\hline
\multirow{2}{*}{$6^2$}  & \multirow{2}{*}{2.95}  &method1         &1.0004  &2.7845   &984(41.00)   &77.34\%    &8  \\
                        &                          &method2         &1.0008  &1.6270   &438(18.25)   &37.69\%    &6  \\\hline
\multirow{2}{*}{$12^2$} & \multirow{2}{*}{3.56}  &method1         &1.0004  &3.0901   &1815(75.63)  &70.93\%    &9  \\
                        &                          &method2         &1.0010  &1.5878   &535(22.29)   &23.05\%    &5  \\\hline
\multirow{2}{*}{$18^2$} & \multirow{2}{*}{3.94}  &method1         &1.0003  &3.1196   &2664(111.00) &69.16\%    &9 \\
                        &                          &method2         &1.0005  &2.2794   &590(24.58)   &16.91\%    &6  \\\hline
 \multirow{2}{*}{$24^2$}& \multirow{2}{*}{4.22}  &method1         &1.0004  &3.0915   &3528(147.00) &68.55\%    &9 \\
                        &                          &method2         &1.0004  &3.2986   &624(26.00)   &13.39\%    &8 \\\hline
\end{tabular}
\end{table}


\begin{table}[H]
\centering\caption{Scalability study with respect to mesh size: Example \ref{Example-2}} 
\label{E-table-9}\vskip 0.1cm
\begin{tabular}{{|c|c|c|c|c|c|c|c|}}\hline
$n$                     & $\Theta$                 & method   & $\lambda_{\min}$ & $\lambda_{\max}$ & pnum &ppnum  & iter\\\hline
\multirow{2}{*}{$6^2$}  &  \multirow{2}{*}{2.95}  &method1         &1.0002  &2.7853   &828(34.50)   &66.01\%    &8  \\
                        &                         &method2         &1.0002  &2.0208   &357(14.88)   &31.81\%    &6  \\\hline
\multirow{2}{*}{$12^2$} &  \multirow{2}{*}{3.56}  &method1         &1.0000  &3.1426   &1723(71.79)  &67.49\%    &8  \\
                        &                         &method2         &1.0000  &2.7312   &485(20.21)   &21.17\%    &7  \\\hline
\multirow{2}{*}{$18^2$} &  \multirow{2}{*}{3.94}  &method1         &1.0000  &3.0267   &2641(110.04) &68.58\%    &8 \\
                        &                         &method2         &1.0000  &3.4736   &565(23.54)   &16.28\%    &7  \\\hline
 \multirow{2}{*}{$24^2$}&  \multirow{2}{*}{4.22}  &method1         &1.0000  &3.0288   &3516(146.50) &68.32\%    &8 \\
                        &                         &method2         &1.0000  &3.2067   &607(25.29)   &13.07\%    &7 \\\hline
\end{tabular}
\end{table}


\begin{table}[H]
\centering\caption{Scalability study with respect to mesh size: Example \ref{Example-3}}
\label{E-table-10}\vskip 0.1cm
\begin{tabular}{{|c|c|c|c|c|c|c|c|}}\hline
$n$                     & $\Theta$                 & method   &$\lambda_{\min}$ & $\lambda_{\max}$ & pnum &ppnum  & iter\\\hline
\multirow{2}{*}{$6^2$}  &  \multirow{2}{*}{2.95}  &method1         &1.0013  &2.7596   &750(31.25)   &60.35\%    &9  \\
                        &                         &method2         &1.0014  &1.9304   &339(14.13)   &30.50\%    &7  \\\hline
\multirow{2}{*}{$12^2$} &  \multirow{2}{*}{3.56}  &method1         &1.0029  &3.4462   &1451(60.46)  &57.31\%    &10  \\
                        &                         &method2         &1.0016  &2.3924   &399(16.63)   &17.96\%    &8  \\\hline
\multirow{2}{*}{$18^2$} &  \multirow{2}{*}{3.94}  &method1         &1.0047  &3.8510   &2059(85.79)  &53.92\%    &11 \\
                        &                         &method2         &1.0020  &2.5554   &435(18.13)   &13.00\%    &8  \\\hline
 \multirow{2}{*}{$24^2$}&  \multirow{2}{*}{4.22}  &method1         &1.0032  &4.1637   &2595(108.13) &50.83\%    &12 \\
                        &                         &method2         &1.0014  &2.9632   &445(18.54)   &9.99\%     &9 \\\hline
\end{tabular}
\end{table}

The results listed in the above-mentioned tables have verified
the correctness of the theoretical results,
and indicate that the proposed two-level adaptive BDDC algorithms are very effective for solving Helmholtz equation with large and various wave numbers.
However, one fact worth noticing is that the number of primal unknowns increase as the wave numbers or the number of subdomains increase,
the corresponding coarse problem will become too large and hard to solve directly, and it is more obvious for the multiplicity scaling matrices.

\subsection{Multilevel results}


In this subsection, we try to apply the multilevel adaptive BDDC algorithm with multiplicity scaling matrices to Example \ref{Example-1} to show its performance for large wave numbers.

For this test, the PCG algorithm is stopped either the iteration counts are greater than 100 or the relative residual is reduced by the factor of $10^{-5}$ at level $0$ and $10^{-2}$ at other levels. We consider a higher wave number model and set the angular frequency $\omega = 320\pi$, the number of plane wave bases in each element $p=15$.
In the algorithm, four subdomains at the finer level are treated as a coarser subdomain.

%

Table \ref{E-table-2-multi} shows the performance of our multilevel adaptive BDDC algorithm for different $N_d$ and $n$, where Tdofs is the total number of dofs, Fpnum and Cpnum separately denote the number of dofs at level 1 and the coarsest level.
From this table, we can see that the number of dofs at level 1 (Fpnum) increases as the number of subdomains increases,
and the scale of the coarse problem which need to be compute directly are hard to be accepted when we use a two-level adaptive BDDC algorithm.
With the increase of the number of levels (L), the number of dofs at the coarsest level (Cpnum) is reduced to about 30\%, and the iteration counts remains almost the same. That is to say, the multilevel algorithm is effective for reducing the number of dofs at the coarse problem and it is also efficient for solving large wave number problems.

\begin{table}[H]
\centering\caption{Performance of the multilevel adaptive BDDC algorithm.}
\label{E-table-2-multi}\vskip 0.1cm
\begin{tabular}{{|c|c|c|c|c|c|c|c|c|}}\hline
$N_d$                   &$n$                     & Tdofs                &Fpnum                   & L  &Cpnum     &  $\lambda_{\min}$ & $\lambda_{\max}$ & iter \\\hline
\multirow{2}{*}{$8^2$}  &\multirow{2}{*}{$64^2$} &\multirow{2}{*}{3.9M} &\multirow{2}{*}{85K}   & 3  &36K      &1.0000  &2.7871   &7 \\
                        &                        &                      &                       & 4  &12K      &1.0000  &2.7971   &7 \\\hline
\multirow{2}{*}{$16^2$} &\multirow{2}{*}{$31^2$} &\multirow{2}{*}{3.7M} &\multirow{2}{*}{221K}  & 4  &35K      &1.0000  &2.1836   &6 \\
                        &                        &                      &                       & 5  &11K      &1.0000  &2.1837   &6 \\\hline
\end{tabular}
\end{table}


\section{Conclusions}

In this paper, by introducing some auxiliary spaces, dual-primal basis functions and operators with essential
properties, BDDC algorithms with adaptive primal unknowns are developed and analyzed for the PWLS discritizations of the Helmholtz equations with high and various wave number.
Since the dofs of the PWLS discritization are defined on elements rather than vertices or edges,
we introduce a special ``interface" and the corresponding sesquilinear form for each subdomains,
which is different from the interface in the existing BDDC algorithms.
As the eigenvalues of the local generalized eigenvalue problems are complex, we choose the primal constraints which are formed by the eigenvectors with
their complex modulus of eigenvalues greater than a user-defined tolerance value $\Theta$,
which is used to construct the transformation operators for selecting dual-primal basis functions and control the condition number.
We prove that the condition number of the preconditioned system is bounded above by $C \Theta$.
A multilevel algorithm is attempted to resolve the bottleneck in large scale coarse problem.
Numerical results are presented to verify the robustness and efficiency of the proposed approaches.
Further, we will extend this method to three-dimensional
case. There is fundamentally different from two-dimensional case and three-dimensional case,
such as we need to consider the construction of the generalized eigenvalue problem for each edge which is shared by more than two subdomains.

\section*{Acknowledgments}

The authors would like to thank Professor Qiya Hu for his suggestions in the PWLS discritization.
This work is supported by the National Natural Science Foundation of China (Grant Nos. 11571293, 11201398, 11601462), Hunan Provincial Natural
Science Foundation of China (Grant No. 2016JJ2129), General Project of Hunan Provincial Education Department of China
(Grant No. 17C1527), Open Foundation of Guangdong Provincial Engineering Technology Research Center for Data Science(Grant No. 2016KF07), and Hunan Provincial Civil-Military Integration Industrial Development Project ``Adaptive Multilevel
Solver and Its Application in ICF Numerical Simulation".

\section*{References}
%

\bibliographystyle{elsarticle-num}

\FloatBarrier

\end{document}